\documentclass[11pt]{paper}

\usepackage{amssymb,amsfonts,amsthm,amsmath}
\usepackage{latexsym,multicol,graphicx}
\usepackage{rotating}

\usepackage{graphicx, verbatim}
\usepackage{tikz}
\usetikzlibrary{decorations.pathreplacing}
\usetikzlibrary{snakes}
\usepackage{xcolor}
\usepackage{verbatim}

\usepackage{pict2e}

\usepackage[a4paper, hmargin=2.5cm, vmargin={3cm, 3cm}]{geometry}

\usepackage{fancyhdr}
\pagestyle{headings}

\usepackage[title,titletoc,toc]{appendix}

\newtheorem{theorem}{Theorem}
\newtheorem{lemma}[theorem]{Lemma}

\newtheorem{corollary}[theorem]{Corollary}

\newtheorem{claim}[theorem]{Claim}

\renewcommand{\phi}{\varphi}

\newcommand{\la}{\lambda}

\newcommand{\cE}{\mathbb E}

\newcommand{\cP}{\mathbb P}

\newcommand{\im}{{\rm i}}

\renewcommand{\le}{\leqslant}
\renewcommand{\ge}{\geqslant}

\newcommand{\bR}{\mathbb R}
\newcommand{\bC}{\mathbb C}
\newcommand{\bZ}{\mathbb Z}

\newcommand{\bN}{\mathbb N}

\newcommand{\bP}{\mathbb P}

\DeclareMathOperator*{\esssup}{ess\,sup}

\begin{document}

\title{Real zeroes of random polynomials, I \\ Flip-invariance, Tur\'an's lemma, and the Newton-Hadamard polygon}

\author{Ken S\"{o}ze \thanks{290W 232nd Str, Apt 4b, Bronx, NY 10463, USA;
\texttt{sozeken65@gmail.com}}}

\date{\today}

\maketitle

\hfill{To Ildar Ibragimov with admiration}

\begin{abstract}
We show that with high probability the number of real zeroes of a random polynomial is bounded by the number of vertices
on its Newton-Hadamard polygon times the cube of the logarithm of the polynomial degree. A similar estimate holds for zeroes
lying on any curve in the complex plane, which is the graph of a Lipschitz function in polar coordinates.
The proof is based on the classical Tur\'an lemma.
\end{abstract}

\section{Introduction}
This work is motivated by the following question attributed to Larry Shepp:
Let \[ P(z) =  \sum_{k=0}^n \la_k z^k \]
be a random polynomial of degree $n\ge 2$ with independent identically distributed random
coefficients $\la_k$. {\em Is it
true that the expected number of real zeroes of $P$ is bounded by $C \log n$}?
Since the classical work of Mark Kac~\cite{Kac}, for many ``decent'' distributions
of the coefficients, it has been proven by Erd\H{o}s and Offord~\cite{EO}, Logan and Shepp~\cite{LS},
Ibragimov and Maslova~\cite{IM, IM-Doklady}, Shepp and Farahmand~\cite{SF} (by no means is this list complete).
Here, we are interested in a bound valid for all distributions.
Several years ago, Ibragimov and Zaporozhets~\cite{IZ} proved that for any distribution of the coefficients,
the expected number of real zeroes is $o(n)$ as $n\to\infty$. Later, in works that remained unpublished,
this was independently improved  by Kabluchko and Zaporozhets and by Krishnapur and Zeitouni to $O(\sqrt{n})$.
In the opposite direction, Zaporozhets~\cite{Zaporozhets} constructed an example of a distribution
wherein the mean number of real zeroes remains bounded as $n\to\infty$.

In this work we suggest two approaches to this question. The first one, presented in this part, is
based on tools from harmonic and complex analysis (Tur\'an's lemma and Jensen's formula).
In the case when the coefficients of $P$ are independent and identically distributed,
it gives a bound $C\, \log^4 n$, which is weaker than the estimate we prove in Part~II.
On the other hand, the approach of Part~I needs less restrictive condition (which we call
``the property $(\Theta)$'') than
independence and identical distribution of the coefficients.
Assuming the property $(\Theta)$, we show that, with
high probability, the number of real zeroes of $P$ is bounded by $C\, V(P)\, \log^3n$ where $V(P)$
is the number of vertices on the Newton-Hadamard polygon of $P$. It also gives the same upper
bound for the number of zeroes of $P$ on
any curve in the complex plane, which is the graph of a Lipschitz function in polar coordinates.

The second approach, which we shall present in Part~II, is based on Descartes' rule of sign changes and on a
new anti-concentration estimate for random permutations
of large order, which might be of independent interest. Both approaches
may be viewed as further development of the techniques introduced
in the pioneering work of Littlewood and Offord~\cite{LO}.

\section{Main results}

\subsection{Key definitions}
We start with three definitions needed to formulate our results.
In what follows, $P$ always stands for
a polynomial of degree $n$ with,
generally speaking, complex-valued  coefficients $\la_k$.

\subsubsection{The number of vertices on the Newton-Hadamard polygon}\label{subsubsub:Newton-Hadamard}
We denote by $V(P)$ the number of vertices on the graph
of the convex polygonal function
\[
t \mapsto \max_{0\le k \le n} \bigl( \log|\la_k| + kt \bigr), \qquad t\in\bR.
\]
Although we will not use it,
it is not difficult to see that equivalently $V(P)$ can be defined as the number of vertices
on the Newton-Hadamard polygon, which is the the upper envelope of convex functions $\phi$ such that
$ \phi (k) \le - \log |\la_k|$, $0\le k \le n$ (in other words, the lower boundary of the
convex hull of $n+1$ vertical rays $\bigl\{ (k, y)\colon - \log |\la_k|\le y < +\infty, \ 0\le k \le n \bigr\} $).
For more on this, see~\cite[Chapter~IV, Problem~41]{PolyaSzego}.

\subsubsection{The Lipschitz curves}
By $\Gamma$ we denote an arbitrary curve defined in polar coordinates
by
\[
\Gamma = \bigl\{z=re^{\im\theta}\colon \theta=\theta(r), 0\le r<\infty \bigr\}\,.
\]
If
\[
\bigl| \theta (r_1) - \theta (r_2) \bigr| \le L\, \bigl| \log \frac{r_1}{r_2}\bigr|\,,
\]
then we call $\Gamma$ an $L$-Lipschitz curve. We denote by
$ N(\Gamma; P)$ the number of zeroes of $P$ on $\Gamma$ (counted with multiplicities).

\subsubsection{Flips of the coefficients}
Let $\la'$ and $\la''$ be $\bC^{n+1}$-valued random variables defined on the same probability space
and having the same distribution.
For $k\in\{0, 1, ..., n\}$, we put $\la_k^+=\la_k'$ and $\la_k^-=\la_k''$, and then,
for any $(n+1)$-tuple of signs $\sigma\in\{+,-\}^{n+1}$, let
$\la^\sigma=(\la_0^{\sigma_0}, \la_1^{\sigma_1}, ... , \la_n^{\sigma_n})$.
We say that the joint law of $\la'$ and $\la''$ is {\em flip-invariant} if
the random variables $\bigl\{ \la^\sigma \bigr\}_{\sigma\in\{+,-\}^{n+1}}$ are equidistributed.

\subsubsection{The property $(\Theta)$}
Here, we introduce our assumption on the distribution of the coefficients $\la\in\bC^{n+1}$
of the polynomial $P$.
We say that the coefficients of the random polynomial $P$ possess property $(\Theta)$
if there exist random variables $\la'$ and $\la''$ equidistributed with $\la$ whose joint law is flip-invariant
and such that, for some $a\in\bC$ and for each $k\in \{0, 1, ..., n\}$,
\[
\bigl| \la_k^{\sigma_k} - \lambda_k^{-\sigma_k} \bigr| \ge \frac12 \bigl[ \bigl| \la_k^{\sigma_k} - a \bigr| + \bigl| \la_k^{-\sigma_k} - a \bigr| \bigr]
\qquad {\rm a.s.\,}.
\]

\medskip
Note that for our purposes, it would suffice to have this inequality with any
constant $\kappa>0$ instead of $\tfrac12$. In the examples, which we  will bring below,
this condition holds with the value $\kappa=\tfrac12$.
To simplify our notation, we decided to fix this value of $\kappa$.

\subsubsection{Three examples of distributions with property $(\Theta)$}

\paragraph{Symmetric distributions.}
For $k\in \{0, 1, ..., n\}$, denote by $\tau_k\colon \bC^{n+1}\to\bC^{n+1}$ the map, which
maps $w_k\mapsto -w_k$ and keeps fixed the rest of coordinates of $w\in\bC^{n+1}$.
Suppose that, for each $k\in \{0, 1, ...\,, n\}$, $\tau_k \circ \la$ has the same distribution
as $\la$. Then the distribution of $\la$ has property $(\Theta)$ with $a=0$, $\la'=\la$, and
$\la'' = -\la$.

Note that in this example we do not assume independence of $\la_k$'s.

\paragraph{Complex-valued independent identically distributed random variables
$\la_0$, $\la_1$, ..., $\la_n$.}
Denote by $\zeta$ the common distribution of $\la_k$'s. We need to produce two random variables
$\zeta^{\pm}$ having the same distribution as $\zeta$ and such that, for some $a\in\bC$,
\[
\bigl| \zeta^+ - \zeta^- \bigr| \ge \frac12 \bigl[ \bigl| \zeta^+ - a \bigr| + \bigl| \zeta^- - a \bigr| \bigr].
\]
We first assume that the probability space $\Omega$ is a union of $2N$ atoms $\omega_i$
having the same probability $\tfrac1{2N}$. Then the general case will follow by approximation\footnote{
Indeed, take a sequence of random variables  $( \zeta_N )$ that converges in distribution to $\zeta$ and
such that $\zeta_N$ attains $2N$ values (not necessarily distinct ones)
with probability $\tfrac1{2N}$ each. Let $(\zeta_N^+, \zeta_N^-)$ be a pair of random variables defined on the same
probability space as $\zeta_N$, equidistributed with $\zeta_N$ and such that,
for some $a_N\in\bC$,
\[
\bigl| \zeta_N^+ - \zeta_N^- \bigr| \ge \frac12 \bigl[ \bigl| \zeta_N^+ - a_N \bigr| + \bigl| \zeta_N^- - a_N \bigr| \bigr].
\eqno (*)
\]
Since $\zeta_N$ converge to $\zeta$ in distribution, the sequence of laws of $\zeta_N$ is tight. Then the sequence
of joint laws of pairs $(\zeta_N^+, \zeta_N^-)$ is tight as well, and we can choose a subsequence $(\zeta_{N_j}^+, \zeta_{N_j}^-)$
that converges in distribution to a pair of
random variables $(\zeta^+, \zeta^-)$ defined on the same probability space as $\zeta$ and equidistributed with $\zeta$.

\smallskip
Furthemore, by tightness of the sequence of laws of $(\zeta_N^+, \zeta_N^-)$, we can choose a large positive
constant $L$ so that, for every $N$, $\bP\bigl\{ |\zeta_N^+ - \zeta_N^-| > L \bigr\} < \tfrac12 $. Therefore,
on an event of probability at least $\tfrac12$,
\[
|a_N| \le |\zeta_N^+ - a_N| + | \zeta_N^- - a_N | + | \zeta_N^+ - \zeta_N^- | \le 3| \zeta_N^+ - \zeta_N^- | \le 3L\,.
\]
Since both $a_N$ and $L$ are non-random, it follows that $|a_N|\le 3L$.
Then, extracting  from $(a_{N_j})$ a convergent subsequence, denoting by $a$ its limit, and applying ($*$) with $N=N_{j}$, $j\to\infty$,
we get the result.
}.

Let
\[
d = \max_{1\le i < j \le 2N} |\zeta(\omega_i)-\zeta(\omega_j)|
\]
be the diameter of the point configuration $\bigl\{ \zeta(\omega_1), ... , \zeta(\omega_{2N}) \bigr\}$ in $\bC$.
Pick up from this configuration a pair of points with the maximal distance. Without loss of generality, assume that they correspond to
the atoms $\omega_{2N}$ and $\omega_{2N-1}$, that is,
$ d = \bigl| \zeta(\omega_{2N-1}) - \zeta (\omega_{2N}) \bigr| $.
Then consider the remaining point configuration and repeat the procedure.
At the last $N$-th step we are left with two points $\zeta(\omega_1)$ and $\zeta(\omega_2)$. Then denote
by $a$ the center of the line segment that connects these two points, that is,
$ a = \tfrac12 \left( \zeta(\omega_1) + \zeta(\omega_2) \right) $.

By construction, for each $1\le i \le N$,
the point $a$ lies at distance at most
$ | \zeta(\omega_{2i-1}) - \zeta (\omega_{2i})| $ from each of the two points
$\zeta (\omega_{2i-1})$, $\zeta(\omega_{2i})$.
Hence,
\[
\bigl| \zeta(\omega_{2i-1}) - \zeta (\omega_{2i}) \bigr|
\ge \frac12 \bigl[ \bigl| \zeta (\omega_{2i-1}) - a \bigr| + \bigl| \zeta (\omega_{2i}) - a \bigr| \bigr].
\]
It remains to let $\zeta^+=\zeta$, and
\[
\zeta^- (\omega_{2i-1}) = \zeta (\omega_{2i})\,, \quad
\zeta^- (\omega_{2i}) = \zeta (\omega_{2i-1})\,, \qquad
1 \le i \le N\,.
\]

\paragraph{Real-valued independent
random variables $\la_0$, $\la_1$, ..., $\la_n$ which have a common median.}
Arguing similarly to the previous example, we construct the coefficients $\la_k^\pm$ equidistributed with $\la_k$
and satisfying $\lambda^+_k + \lambda^-_k = 2a$, $0\le k \le n$, where $a$ is the common median for
$\la_0$, $\la_1$, \ldots \, $\la_n$.

Note that in this example we have not assumed that
the coefficients $\la_k$ are identically distributed.

\subsubsection{A technical assumption}
To avoid degeneracies, in what follows, we always assume that the coefficients of the random polynomial $P$ satisfy
\begin{equation}\label{eq:nondegen-lambda}
\cP \bigl\{ \la_0 = 0 \bigr\} = \cP \bigl\{ \la_n = 0 \bigr\} =0\,.
\end{equation}
That is, $P$ does not vanish at the origin and the degree of $P$ does not drop.
This condition can be dropped at the cost of a somewhat longer wording of the main result.

\subsection{The main theorem}
At last, we are ready to state the main result of this note:

\begin{theorem}\label{thm:main}
Let $P$ be a random polynomial of degree $n\ge 2$ with coefficients having the property $(\Theta)$ and
satisfying the non-degeneracy condition~\eqref{eq:nondegen-lambda}. Let  $L\ge 0$ and $A>0$.
Then, with probability at least $1-n^{-A}$, we have
\[
\sup\bigl\{N(\Gamma; P)\colon \Gamma \ {\rm is\ } L\!-\!{\rm Lipschitz} \bigr\} \le C(A, L)\, V(P)\, \log^3 n\,.
\]
\end{theorem}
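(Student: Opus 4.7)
The plan is to combine Tur\'an's lemma with a Jensen-type zero-counting argument, after establishing anti-concentration for $P$ via property~$(\Theta)$. The overall strategy reduces the global zero count on $\Gamma$ to a sum of local counts, one per vertex of the Newton--Hadamard polygon of $P$.

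\textbf{Anti-concentration via flip-invariance.} I first derive a pointwise Littlewood--Offord-type estimate: for any fixed $z\in\bC\setminus\{0\}$,
\[
\bP\bigl\{|P(z)| < \delta\, e^{M(\log|z|)}\bigr\} \le C\,\delta,
\]
where $M(t)=\max_k(\log|\la_k|+kt)$. Conditioning on $(\la^+,\la^-)$ supplied by property~$(\Theta)$ and introducing an independent uniform $\si\in\{+,-\}^{n+1}$, flip-invariance makes $P^\si(z)=\sum_k \la_k^{\si_k}z^k$ equidistributed with $P(z)$. The decomposition $\la_k^{\si_k}=\tfrac12(\la_k^++\la_k^-)+\tfrac{\si_k}{2}(\la_k^+-\la_k^-)$ (identifying $\si_k$ with $\pm 1$) expresses $P^\si(z)$ as a deterministic shift plus a Rademacher sum $\sum_k\si_k a_k$ with $a_k=\tfrac12(\la_k^+-\la_k^-)z^k$. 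A L\'evy-type anti-concentration inequality bounds the concentration by $C\delta/\max_k|a_k|$, and property~$(\Theta)$ forces $|a_k|\ge\tfrac14(|\la_k^+-a|+|\la_k^--a|)|z|^k$, whose maximum in $k$ is (with high probability over $\la^\pm$) comparable to $e^{M(\log|z|)}$. Upgrading this via a polynomial-size net and the Bernstein bound $|P'(z)|\le Cn\,e^{M(\log|z|)}/|z|$ yields, on an event of probability at least $1-n^{-A}$, the uniform lower bound $|P(z)|\ge n^{-C(A)}e^{M(\log|z|)}$ for every $z$ outside exceptional disks of total angular measure $O(n^{-B})$.

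\textbf{Newton--Hadamard decomposition.} The polygon has $V(P)-1$ transition radii $r_e$, at each of which the dominant exponent switches from $k_{e-1}$ to $k_e$. I take transition annuli $A_e=\{r_e e^{-h_e}<|z|<r_e e^{h_e}\}$ with $h_e\asymp\log n/(k_e-k_{e-1})$, chosen so that outside $A_e$ the dominant term beats all others by a factor $n^{C(A)+1}$; Rouch\'e's theorem combined with the above lower bound then confines the zeros of $P$ to $\bigcup_e A_e$.

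\textbf{Local zero counting and the main obstacle.} Inside each $A_e$, the normalized function $g_e(z)=P(z)/(\la_{k_e}z^{k_e})$ is analytic and satisfies $n^{-C(A)}\le|g_e|\le n+1$ on the good set. I cover $\Gamma\cap A_e$ by $O(L\log n)$ small disks spanning its logarithmic width, and in each Jensen's formula applied to $g_e$ contributes at most $C(A)\log n$ zeros. The real difficulty---and the reason for invoking Tur\'an's lemma---is to suppress the factor $k_e-k_{e-1}$ that a naive argument would pick up: the two-term polygon approximation $\la_{k_{e-1}}z^{k_{e-1}}+\la_{k_e}z^{k_e}$ has $k_e-k_{e-1}$ zeros equally spaced on a single circle inside $A_e$. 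The Lipschitz constraint restricts $\Gamma\cap A_e$ to an angular sweep of $O(L\log n/(k_e-k_{e-1}))$, so the curve meets only $O(L\log n)$ of those zeros; Tur\'an's lemma then transfers this count from the two-term approximation to the full polynomial $P$ at the cost of one further logarithmic factor. The three logarithms---Jensen ratio, disk-cover count, and Tur\'an factor---combine to $O(\log^3 n)$ zeros per vertex annulus, which summed over the $V(P)-1$ vertices yields the claimed bound.
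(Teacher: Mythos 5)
Your proposal identifies the right ingredients (flip-invariance, Tur\'an, Jensen, a localization driven by the Newton--Hadamard polygon) but the engine that makes them work is missing, and the anti-concentration step as you state it is false.

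The claimed pointwise estimate $\bP\{|P(z)|<\delta\, e^{M(\log|z|)}\}\le C\delta$ cannot hold for the reduced (post-flip) polynomial. After the reduction of Section~2.5 the coefficients are symmetric two-point random variables, so $P^\sigma(z)$ is a Rademacher sum of the form $b+\sum_k\sigma_k a_k$ supported on at most $2^{n+1}$ atoms. For such sums the best one can say pointwise is a Littlewood--Offord (Erd\H{o}s) bound of order $1/\sqrt{K}$, where $K$ is the number of terms with $|a_k|\gtrsim\delta\cdot\max_j|a_j|$; in the extreme case where only one $a_k$ is large this is simply $\tfrac12$ and does not go to zero with $\delta$. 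The ``L\'evy-type'' bound $C\delta/\max_k|a_k|$ you quote requires a density assumption that discrete coefficients do not satisfy. Consequently the failure probability at a single point $z$ does not become small, and no polynomial-size net plus Bernstein derivative bound can promote it to a $1-n^{-A}$ event. This is not a removable technicality: it is exactly the difficulty the paper must overcome.

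The paper's Theorem~\ref{thm:lower-bound} solves this by giving a lower bound not at a point but for $\max_{\theta\in I}|P(re^{\im\theta})|$ over an arc $I$, with failure probability $\le 2^{-m}$ where $m$ is a free parameter. This exponentially small failure probability is what makes the polynomial union bound over $\sim n^5$ test pairs viable after setting $m\asymp\log n$. Tur\'an's lemma enters precisely here, and in two essential ways: first, to show that if $\max_I|P|$ is small then $P$ can have at most $O(m)$ ``large'' Fourier coefficients (the case of few large coefficients); second, in the combinatorial double-counting argument (Claim~\ref{claim:dangerous}) showing that ``dangerous'' sign sequences, flipped in $m$ of their large coordinates, land on distinct outcomes---here Tur\'an applied to the difference $P_1-P_2$ (a polynomial with $\le 2m$ terms) rules out collisions. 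Your proposal instead invokes Tur\'an's lemma at the very end, to ``transfer a zero count from a two-term approximation to $P$,'' but Tur\'an's lemma is a lower bound for exponential sums on arcs, not a zero-transfer statement, and the paragraph does not reduce to any precise lemma. With the correct Theorem~\ref{thm:lower-bound} in hand, the rest of your outline (localize near the polygon's transitions, $O(L\log n)$ disks per transition, Jensen's formula contributing $O(\log^2 n)$ zeros per disk because $m\asymp\log n$ appears in the exponent of the lower bound) is essentially what the paper does, so the gap is concentrated in the anti-concentration step and the misplaced role of Tur\'an's lemma.
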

\noindent Here, $C(A, L)$ is a positive value that depends only on the parameters $A$ and $L$.

\medskip
Note that there is no hope for a similarly strong non-random estimate: a construction, which goes back
to  Bloch and P\'olya~\cite{BP}, allows one to construct a polynomial $P$ of any degree
$n\ge 2$ with $V(P)=2$ and with at least $\sqrt{ n/\log n }$ positive zeroes.

\subsection{A corollary for the case of i.i.d. coefficients}
As an almost immediate corollary, we obtain

\begin{corollary}
Suppose that the coefficients of $P$ are independent identically distributed random variables
satisfying the non-degeneracy condition~\eqref{eq:nondegen-lambda}. Then
\[
\sup\bigl\{ \cE [ N(\Gamma; P) ]\colon  \Gamma \ {\rm is\ } L\!-\!{\rm Lipschitz}\bigr\} \le C(L)\, \log^4 n, \qquad n\ge 2\,.
\]
\end{corollary}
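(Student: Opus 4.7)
The plan is to apply Theorem~\ref{thm:main} with a fixed $A=2$ and integrate. As shown in the third example above, the i.i.d.\ case satisfies property $(\Theta)$, so Theorem~\ref{thm:main} provides an event of probability at least $1-n^{-2}$ on which
\[
\sup_\Gamma N(\Gamma;P)\le C(L)\, V(P)\,\log^3 n.
\]
On the complementary event I use the deterministic bound $N(\Gamma;P)\le n$ coming from the degree of $P$; this contributes at most $n\cdot n^{-2}=n^{-1}$ to the expectation. Combined with $\sup_\Gamma \cE[N(\Gamma;P)]\le \cE[\sup_\Gamma N(\Gamma;P)]$, the corollary reduces to the estimate $\cE[V(P)] = O(\log n)$ for i.i.d.\ coefficients.

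For this estimate, the key observation is that every vertex of the Newton--Hadamard polygon is a one-sided maximum in the sequence $Y_k := \log|\la_k|$. Indeed, suppose $k$ is a vertex but there exist $j_-<k$ and $j_+>k$ with $Y_{j_-}>Y_k$ and $Y_{j_+}>Y_k$. Then the chord joining $(j_-,Y_{j_-})$ and $(j_+,Y_{j_+})$ passes strictly above $(k,Y_k)$, because any convex combination of two values greater than $Y_k$ is itself greater than $Y_k$; hence $(k,Y_k)$ lies strictly below the upper convex hull, contradicting the vertex assumption. Consequently, each vertex $k$ satisfies either $Y_k=\max_{j\le k}Y_j$ (a forward record) or $Y_k=\max_{j\ge k}Y_j$ (a backward record), and $V(P)$ is bounded above by the combined number of forward and backward records in $(Y_k)$.

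Since the $\la_k$, and hence the $Y_k$, are i.i.d., exchangeability yields $\cP(Y_k\text{ is a forward record})=1/(k+1)$; summing and using the same bound in reverse, $\cE[V(P)]\le 2\sum_{k=0}^n 1/(k+1)=O(\log n)$. Putting everything together,
\[
\sup_\Gamma \cE[N(\Gamma;P)]\le C(L)\,\log^3 n\cdot \cE[V(P)] + n^{-1}\le C(L)\,\log^4 n,
\]
as claimed. The main conceptual step is the vertex--record observation; once it is in hand, the rest is routine bookkeeping with the high-probability estimate from Theorem~\ref{thm:main}.
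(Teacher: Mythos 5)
Your overall structure matches the paper's: apply Theorem~\ref{thm:main}, control the exceptional event by the trivial bound $N(\Gamma;P)\le n$, and reduce to showing $\cE[V(P)]=O(\log n)$. (You take $A=2$ where the paper takes $A=1$; both work.)

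There is, however, a genuine gap in your estimate of $\cE[V(P)]$. Your chord argument correctly shows that a vertex $(k,Y_k)$ of the upper concave hull cannot have a strictly larger point on \emph{both} sides, i.e.\ $Y_j\le Y_k$ for all $j<k$ or $Y_j\le Y_k$ for all $j>k$ — a \emph{non-strict} one-sided maximum. But you then assert $\cP\bigl(Y_k=\max_{j\le k}Y_j\bigr)=1/(k+1)$ by exchangeability, which is false when the common law of $|\la_k|$ has atoms (the non-degeneracy condition~\eqref{eq:nondegen-lambda} does not rule this out). For instance, if $|\la_k|=1$ a.s., every index is a non-strict forward record, so your bound on $V(P)$ becomes $2(n+1)$ while in fact $V(P)=2$; with $Y_0,Y_1$ i.i.d.\ Bernoulli$(\tfrac12)$ one already has $\cP(Y_1\ge Y_0)=\tfrac34\ne\tfrac12$. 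Exchangeability only gives the $1/(k+1)$ bound for the event of a \emph{strict} one-sided maximum $\{Y_k>Y_j\ \forall j<k\}$, because those events (over the candidate maximizer) are disjoint. The paper gets strictness directly from its characterization of vertices via the central index: if $\nu$ is the \emph{largest} index achieving $\max_k|\la_k|r^k$ for some $r\in(0,1)$, then $|\la_\nu|>|\la_k|$ for all $k<\nu$ (and the mirror statement for $r\ge 1$). Your geometric picture can be repaired: at an interior vertex, take a supporting line whose slope lies strictly between the left and right slopes of the hull; since that line meets the hull only at the vertex, one deduces a \emph{strict} one-sided record depending on the sign of the slope. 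But as written, the passage from ``vertex implies non-strict record'' to the $1/(k+1)$ probability bound does not go through for general (possibly atomic) i.i.d.\ coefficients.
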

\noindent In particular,
$ \cE [ N(\bR; P) ] \le C\, \log^4 n $
with a positive numerical constant $C$. As we have already mentioned, the latter estimate will be improved in
Part~II by a different technique.

\begin{proof} We use Theorem~\ref{thm:main} with $A=1$. Since the total number of zeroes of
$P$ on $\Gamma$ cannot exceed $n$, a set of probability $n^{-1}$ can contribute to the expectation
$\cE [ N(\Gamma; P) ]$ by at most~$1$. Therefore,
\[
\sup\bigl\{ \cE [ N(\Gamma; P) ]\colon  \Gamma \ {\rm is\ } L\!-\!{\rm Lipschitz}\bigr\}
\le C(L) \log^3 n \cdot \cE [ V(P)]\,.
\]
To estimate the mean $\cE [V(P)]$, we note that $V(P)$
equals the cardinality of the set of indices $\nu\in \{0, 1, ... , n\}$ such that, for some $r\in (0, \infty)$,
\begin{equation}\label{eq:max-index}
\nu \ {\rm is\ the\ largest\ index\ satisfying\ } \quad |\la_\nu| r^\nu = \max_{0\le k \le n} |\la_k|r^k\,,
\end{equation}
If~\eqref{eq:max-index} holds for some $r\in (0, 1)$, then
\[
|\la_\nu| > |\la_k| \qquad {\rm for\ each\ } k\in\{0, 1, ..., \nu-1\}\,.
\]
By symmetry, the probability of this event
does not exceed $\tfrac1{\nu+1}$. Similarly,
if~\eqref{eq:max-index} holds for some $r\in [1, \infty)$, then
\[
|\la_\nu| > |\la_k| \qquad {\rm for\ each\ } k\in\{\nu+1, \nu+2, ... , n\}\,,
\]
and the probability of this event is $\le\tfrac{1}{n-\nu+1}$. Thus,
\[
\cE [V(P)] \le 2 \bigl( 1 + \tfrac12 +\, ...\, + \tfrac1{n} \bigr) \le C \log n\,,
\qquad n\ge 2\,,
\]
proving the corollary.
\end{proof}

\subsection{A probabilistic lower bound for a random polynomial on an arc}

The following result is the main tool needed for the proof of Theorem~\ref{thm:main}.
Likely, it may be of independent interest. Put
\[
S(r, P) = \sum_{k=0}^n |\la_k| r^k\,.
\]
\begin{theorem}\label{thm:lower-bound}
Let $P$ be a random polynomial of degree $n\ge 2$ with coefficients having the property $(\Theta)$.
Let $m\in\bN$, let $r>0$, and let
$I\subset \bR$ be an interval of length at most $2\pi$. Then, for some positive numerical constant $c$,
\[
\cP \Bigl\{  \max_{\theta\in I} |P(re^{\im\theta})| \le n^{-2} \bigl(  c\,|I| \bigr)^{6m} \, S(r, P) \Bigr\} \le 2^{-m}\,.
\]
\end{theorem}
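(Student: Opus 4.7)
The plan uses three main ingredients: flip-invariance to reduce to i.i.d.\ Rademacher signs, Tur\'an's lemma (in its few-term form) to pass from the arc $I$ to the full circle $\{|z|=r\}$, and a property $(\Theta)$-driven anti-concentration argument organised by the Newton--Hadamard polygon.

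First I would apply flip-invariance. Because $\la^\sigma\stackrel{d}{=}\la$ for every $\sigma\in\{+,-\}^{n+1}$, averaging over a uniformly random $\sigma$ independent of $(\la',\la'')$ yields
\[
\cP\bigl\{\max_{\theta\in I}|P(re^{\im\theta})|\le n^{-2}(c|I|)^{6m}S(r,P)\bigr\}
=\cE_{\la',\la''}\Bigl[\cP_\sigma\bigl\{\max_{\theta\in I}|P^\sigma(re^{\im\theta})|\le n^{-2}(c|I|)^{6m}S(r,P^\sigma)\bigr\}\Bigr],
\]
so it is enough to bound the inner probability by $2^{-m}$ almost surely in $(\la',\la'')$. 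Under this conditioning, each $\sigma_k$ is an independent Rademacher $\epsilon_k\in\{\pm1\}$, and $\la_k^{\sigma_k}=Q_k+\epsilon_k b_k$ with $Q_k=\tfrac12(\la_k^++\la_k^-)$, $b_k=\tfrac12(\la_k^+-\la_k^-)$. Property $(\Theta)$ provides $|b_k|\ge\tfrac14(|\la_k^+-a|+|\la_k^--a|)$, hence $|\la_k^{\sigma_k}|\le|a|+4|b_k|$ and $S(r,P^\sigma)\le|a|\sum_k r^k+4T(r)$ with $T(r):=\sum_k|b_k|r^k$; upper-bounding $S(r,P^\sigma)$ by this deterministic quantity only enlarges the event, so the claim becomes a clean Rademacher anti-concentration statement for $P^\sigma=Q+R_\epsilon$, where $R_\epsilon(z)=\sum_k\epsilon_k b_k z^k$, against a deterministic threshold.

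Next I would combine the few-term form of Tur\'an's lemma with a sign-based anti-concentration. Tur\'an gives, for any trigonometric polynomial on $\{|z|=r\}$ with at most $N$ nonzero Fourier modes,
\[
\max_{|z|=r}|\tilde P|\le\Bigl(\frac{C}{|I|}\Bigr)^{N-1}\max_{z\in I,\,|z|=r}|\tilde P|,
\]
so I would restrict attention to an auxiliary polynomial $\tilde P$ retaining $M\asymp m$ carefully chosen pivot indices $\nu_1,\dots,\nu_M$ along the Newton--Hadamard polygon of $T$, with the discarded coefficients contributing to $\max_{|z|=r}|P^\sigma|$ at most an $n^{-2}$ fraction of $T(r)$ deterministically (the convexity of the polygon together with Jensen's formula are the relevant tools here). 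For each pivot, the parallelogram identity $|Q+b|^2+|Q-b|^2=2(|Q|^2+|b|^2)$ yields $|Q_{\nu_j}+\epsilon_{\nu_j}b_{\nu_j}|\ge|b_{\nu_j}|$ with conditional probability at least $\tfrac12$ over $\epsilon_{\nu_j}$, and these $M$ events are independent because they depend on disjoint Rademachers. Hence the probability that all of them fail is at most $2^{-M}\le 2^{-m}$; on the complementary event, the Cauchy integral formula forces $\max_{|z|=r}|\tilde P|\ge\max_j|b_{\nu_j}|r^{\nu_j}\gtrsim T(r)/n^2$, and Tur\'an converts this into the desired lower bound on $\max_{\theta\in I}|P^\sigma(re^{\im\theta})|$. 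The residual geometric-series piece $|a|\sum_k z^k$ arising from $S(r,P^\sigma)$ is treated analogously, as it is an explicit polynomial with a single shape parameter.

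The main obstacle is the simultaneous requirement that the pivot set be small (to keep Tur\'an's exponent at $O(m)$) and that its complement be negligible against $T(r)$ at the scale of $n^{-2}$, so that the discarded tail fits inside the slack in the threshold. The Newton--Hadamard polygon is what reconciles these: its vertices mark the ``structural'' coefficients, and a buffer of auxiliary indices around each vertex allows the tail to be controlled via the convexity of the polygon and Jensen's formula applied to $\log|P^\sigma|$ on $\{|z|=r\}$. Getting the constant in the exponent $6m$ to come out right, rather than a larger $O(m)$, is where the most careful accounting is needed.
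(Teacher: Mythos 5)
Your reduction to Rademacher signs via flip-invariance is essentially the paper's reduction lemma (Section~\ref{subsect:reduction}) and is correct, but after that your route diverges from the paper's and the divergence opens a genuine gap. You propose to split $P^\sigma$ into a few-term pivot polynomial $\tilde P$ on $M\asymp m$ indices and a tail $R$, prove that some pivot coefficient is large with probability $\ge 1-2^{-M}$ (the independent-Rademacher argument is fine), convert a circle bound for $\tilde P$ into an arc bound via Tur\'an, and then absorb the tail. The absorption step is where the plan breaks: your claim that the discarded coefficients contribute ``at most an $n^{-2}$ fraction of $T(r)$ deterministically'' is not achievable in general. Take $a=0$, $\la_k^\pm=\pm 1$ for all $k$, and $r=1$ — a perfectly valid $(\Theta)$-configuration. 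Then $|b_k|r^k=1$ for every $k$, so $T(r)=n+1$; whatever $M\asymp m$ pivots you retain, the tail still sums to $n+1-M\approx n$, which is nowhere near $(n+1)/n^2$. The Newton--Hadamard polygon is flat here and offers no decay, and Jensen's formula (a zero-counting tool) does not supply the missing coefficient estimate. Since $\max_{\theta\in I}|P^\sigma|\ge\max_{\theta\in I}|\tilde P|-\max_{\theta\in I}|R|$, an uncontrolled $R$ destroys the desired lower bound.

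The paper's proof handles exactly this obstruction by a different anti-concentration mechanism, and it is worth seeing how. It first shows a purely deterministic statement: if at most $2m$ indices have $|\la_k-a|r^k\ge\delta S(r;P)$, then Tur\'an applied to the large part of the auxiliary polynomial $\bar P(z)=(1-z)P(z)$ already gives the required lower bound on $\max_{\theta\in I}|P|$, with the small terms genuinely small because they fall below the threshold $\delta S(r;P)$ \emph{by definition}. No independence is used at this stage. All of the probabilistic content sits in the complementary case of $\ge 2m$ large coefficients, and there the argument is a combinatorial double count rather than a product of per-coordinate probabilities: from any ``dangerous'' sign sequence one obtains $\binom{2m}{m}\ge 2^m$ new sequences by flipping $m$-element subsets of large indices, and Tur\'an applied to the \emph{difference} $P_1-P_2$ of two such polynomials (which has at most $2m$ terms, all large by property $(\Theta)$) forces all these new sequences to be pairwise distinct. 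Hence the dangerous set has cardinality $\le 2^{n+1-m}$. This sidesteps the tail issue entirely: what must be small is $\max_{\theta\in I}|P_1-P_2|$, and $P_1-P_2$ is automatically few-term, so there is nothing to absorb. If you want to keep your independence-based viewpoint, you would need to replace the deterministic tail claim with something that works in the flat-polygon case; the paper's counting argument is precisely such a replacement.
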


The proof of this theorem will be given in Section~\ref{Section3}.

\subsection{The reduction principle}\label{subsect:reduction}
Our starting point is the following claim:
\begin{lemma} Suppose that the coefficients of the random polynomial $P$ possess the property $(\Theta)$.
Then, for any Borel set $\Lambda\subset\bC^{n+1}$, we have
\[
\cP\bigl\{ \lambda\in\Lambda \bigr\}
\le \sup\, \cP\bigl\{ \upsilon\in\Lambda \bigr\}\,,
\]
where the supremum is taken over all random variables $\upsilon\colon  \{+, -\}^{n+1}\to\bC^{n+1}$ of the form
$\upsilon^\sigma = (\upsilon_0^{\sigma_0}, \upsilon_1^{\sigma_1}, \, ...\,, \upsilon_n^{\sigma_n})$ such that
the random variables $\upsilon_k$ are independent, take the values $\upsilon^{\pm}_k$ with probability $\tfrac12$ and,
for some $a\in\bC$,
\begin{equation}\label{eq:upsilon}
\bigl| \upsilon_k^+ - \upsilon_k^- \bigr| \ge \frac12 \bigl[ \bigl| \upsilon_k^+ - a \bigr| + \bigl| \upsilon_k^- - a \bigr| \bigr]\,,
\qquad k\in\{0, 1, ..., n \}\,.
\end{equation}
\end{lemma}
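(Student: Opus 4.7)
The plan is to express $\cP\{\la\in\Lambda\}$ as an average of conditional probabilities, each of which can be bounded by the supremum in the statement. The flip-invariant pair $(\la', \la'')$ furnished by property $(\Theta)$ is exactly the device that makes this possible.

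First, I invoke property $(\Theta)$ to obtain a pair $(\la', \la'')$ of $\bC^{n+1}$-valued random variables, equidistributed with $\la$ and jointly flip-invariant, together with the non-random point $a\in\bC$ for which the defining inequality of $(\Theta)$ holds almost surely and simultaneously for every $k$. Next, I introduce an independent uniformly distributed sign pattern $\sigma$ on $\{+,-\}^{n+1}$, independent of $(\la',\la'')$, and form $\tilde\la := \la^\sigma$. Flip-invariance yields $\cP\{\la^{\sigma_0}\in\Lambda\} = \cP\{\la\in\Lambda\}$ for every deterministic $\sigma_0$, so averaging over the uniform $\sigma$ gives $\cP\{\tilde\la\in\Lambda\} = \cP\{\la\in\Lambda\}$. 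Finally, I condition on the $\sigma$-algebra generated by $(\la', \la'')$. Given these values, the $k$-th coordinate of $\tilde\la$ equals $\la_k'(\omega)$ or $\la_k''(\omega)$, each with probability $\tfrac12$, and the coordinates are mutually independent since the $\sigma_k$ are. Setting $\upsilon_k^\pm := \la_k^\pm(\omega)$ and reusing the same $a$ from property $(\Theta)$, the inequality defining $(\Theta)$ becomes precisely condition~\eqref{eq:upsilon}; hence for almost every $\omega$ the conditional law of $\tilde\la$ belongs to the class over which the supremum is taken. Consequently the conditional probability is bounded by that supremum, and integrating back completes the proof.

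The only point requiring a moment's thought is that the constant $a$ entering~\eqref{eq:upsilon} must be both non-random and common to all coordinates $k$; this is already guaranteed by the phrasing of property $(\Theta)$, so no real obstacle arises. The remainder is just bookkeeping: flip-invariance supplies the equidistribution of the sign-flipped copies, and the tower property converts the conditional bound into the unconditional one.
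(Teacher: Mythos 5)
Your argument is correct and is essentially the paper's own proof: both pass to the product space $\Omega\times\{+,-\}^{n+1}$, use flip-invariance to identify the law of $\la^\sigma$ with that of $\la$, and then freeze $\omega$ (you via conditioning on $(\la',\la'')$ and the tower property, the paper via an essential supremum over $\omega$) to recognize the conditional law as one of the admissible $\upsilon$'s satisfying~\eqref{eq:upsilon} with the same non-random $a$. No gaps; the two write-ups differ only in whether the final bound is phrased as an average against, or an essential supremum of, the conditional probabilities.
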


It is worth noting that for independent real-valued random variables, this reduction was
used already by Kolmogorov in~\cite{Kolm}, where he proved a slightly weaker version of what is called nowadays the
Kolmogorov-Rogozin concentration inequality.

\begin{proof}
Let $\Omega$ be the underlying probability space of $\la'$ and $\la''$ in the definition of flip-invariance, let
$\widetilde\Omega=\Omega\times\{ +, - \}^{n+1}$ be the product space with the uniform distribution over all sign sequences
$\sigma=(\sigma_0, \sigma_1, ...\,, \sigma_n)$, and let
$\la^\sigma \stackrel{\rm def}=(\la_0^{\sigma_0}, \la_1^{\sigma_1}, ...\,, \la_n^{\sigma_n})$. Then $\la^\sigma\colon \widetilde{\Omega}\to \bC^{n+1}$
and, for each $\sigma\in \{ +, - \}^{n+1}$, the random variables $\la^\sigma$ and $\la$ are equidistributed.
Therefore,
\[
\bP^\Omega \{ \la\in\Lambda \} = \bP^{\Omega\times \{+,-\}^{n+1}} \{ \la^\sigma \in\Lambda \}
\le \esssup_{\omega\in\Omega} \bP^{\{+,-\}^{n+1}} \{ \la^\sigma (\omega) \in\Lambda \}\,.
\]
It remains to observe that, for a.e. $\omega\in\Omega$, the random variable $\upsilon^\sigma = \la^\sigma(\omega)$ satisfies~\eqref{eq:upsilon}
with the same value $a$ as in the condition $(\Theta)$. Hence, the essential supremum on the RHS does not exceed the supremum
in the conclusion of the lemma.
\end{proof}

Thus, it suffices to prove Theorems~\ref{thm:main} and~\ref{thm:lower-bound}
for a  special class of random polynomials. Hence,
in what follows, we assume that:
\begin{itemize}
\item[(a)]
the underlying probability space is $\{ +, -\}^{n+1}$
with the uniform distribution over sign sequences,
and, as above, we denote the elements of this space by $\sigma= (\sigma_0, \sigma_1, ... , \sigma_n)$;
\item[(b)]
$( \la_k^\pm )$ are $2n+2$ complex numbers, $a$ is a complex number, and for each $k\in\{0, 1, ..., n \}$,
\[
\bigl| \la_k^+ - \lambda_k^- \bigr| \ge \frac12 \bigl[ \bigl| \la_k^+ - a \bigr| + \bigl| \la_k^- - a \bigr| \bigr]\,;
\]
\item[(c)]
the random variables $\la_k$ are independent and $\la_k$ takes the values $\la^\pm_k$ with probability $\tfrac12$ each.
\end{itemize}

\section{Proof of Theorem~\ref{thm:lower-bound}}\label{Section3}

The main ingredient of the proof of Theorem~\ref{thm:lower-bound} is Tur\'an's lemma~\cite[Chapter~5, Lemma~1]{Mont}
(see also~\cite[Chapter~1]{Nazarov}):
\begin{lemma}\label{lemma:Turan}
Let
\[
p(t) = \sum_{k=1}^m a_k e^{\im \ell_k t}, \qquad a_k\in\bC, \quad \ell_k\in\bZ, \ell_k\ne\ell_j {\ \rm for\ } k\ne j\,.
\]
Then for every interval $I\subset\bR$ of length at most $2\pi$,
\[
\max_I |p| \ge \bigl( b|I| \bigr)^{m-1}\, \sum_{k=1}^m |a_k|
\]
with a positive numerical constant $b$.
\end{lemma}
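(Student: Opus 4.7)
The plan is to prove Tur\'an's lemma by constructing, for each index $k_0$, a linear functional on $L^\infty(I)$ that extracts the coefficient $a_{k_0}$ with controlled norm.

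First, by translating the variable $t$ (which affects neither the $|a_k|$ nor $\max_I|p|$), I would assume that $I$ is centered at $0$; by multiplying $p$ by the unimodular factor $e^{-\im \ell_1 t}$ (again affecting neither quantity), I would normalize to $\ell_1=0$. It then suffices to prove the bound in the form $\max_I |p| \ge (b|I|)^{m-1}|a_{k_0}|$ for the index $k_0$ realizing $|a_{k_0}|=\max_k|a_k|$, because $|a_{k_0}|\ge m^{-1}\sum_k|a_k|$ and the elementary inequality $2^{m-1}\ge m$ for $m\ge 1$ permits the factor $m^{-1}$ to be absorbed by replacing $b$ with $b/2$. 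The base case $m=1$ is trivial since $|p(t)|\equiv|a_1|$.

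The core step is to exhibit $m$ points $t_1,\ldots,t_m\in I$ and weights $c_1,\ldots,c_m\in \bC$ with $\sum_j|c_j|\le (C/|I|)^{m-1}$ such that
\[
\sum_{j=1}^m c_j e^{\im\ell_k t_j}=\delta_{k,k_0},\qquad 1\le k\le m.
\]
Applying this identity to $p(t_j)=\sum_k a_k e^{\im\ell_k t_j}$ gives $a_{k_0}=\sum_j c_j p(t_j)$, whence $|a_{k_0}|\le (C/|I|)^{m-1}\max_I|p|$; this is the desired bound with $b=1/C$. The weights are obtained by solving the generalized Vandermonde system with matrix $\bigl(e^{\im\ell_k t_j}\bigr)_{k,j}$ for the right-hand side $e_{k_0}$.

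The main obstacle is that a naive Lagrange estimate for the weights $c_j$ introduces spurious dependence on the frequencies $\ell_k$ through the generalized Vandermonde determinant, whereas the target bound must depend only on their number $m$. Tur\'an's classical workaround is to pass from a point-evaluation basis to a Taylor-derivative basis at the center of $I$: $m$ well-chosen divided differences of $p$ yield the frequency-free estimate $|p^{(j)}(0)|\le (Cj/|I|)^j\max_I|p|$, and a Vandermonde inversion performed purely in the $\ell_k$-variables (whose combinatorial contribution can be absorbed in the constant) combines with this derivative bound to produce the factor $(b|I|)^{m-1}$. Executing this final assembly is the step I would perform most carefully, following the presentations of Montgomery~\cite{Mont} and Nazarov~\cite{Nazarov} that are already cited in the paper.
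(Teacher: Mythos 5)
First, note that the paper does not actually prove Tur\'an's lemma from scratch: it quotes the classical form $\max_I|p|\ge(b|I|)^{m-1}\max_{[-\pi,\pi]}|p|$ from Montgomery and Nazarov, and only supplies the short Cauchy--Schwarz/Parseval step $\sum_k|a_k|\le\sqrt m\,\max_{[-\pi,\pi]}|p|$, the residual $\sqrt m$ being absorbed into $b$ exactly as in your $2^{m-1}\ge m$ remark. You instead attempt a full proof. Your skeleton --- find $t_1,\dots,t_m\in I$ and weights $c_j$ with $\sum_j c_j e^{\im\ell_k t_j}=\delta_{k,k_0}$ and $\sum_j|c_j|\le(C/|I|)^{m-1}$ --- is the correct dual formulation, and it is indeed how the classical proof runs.

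The gap is in your final paragraph, which is the only nontrivial step. (i) The asserted bound $|p^{(j)}(0)|\le(Cj/|I|)^j\max_I|p|$ for a sparse exponential sum with arbitrary integer frequencies is not an elementary divided-difference fact; it is itself a Tur\'an-type inequality of essentially the same depth as the lemma (in Nazarov's treatment such derivative estimates are \emph{deduced from} the lemma, not used to prove it). (ii) More importantly, the ``Vandermonde inversion purely in the $\ell_k$-variables'' is not frequency-free: writing $a_{k_0}=\sum_j d_j p^{(j)}(0)$ forces $\sum_j d_j X^j=\prod_{k\ne k_0}\frac{X-\im\ell_k}{\im\ell_{k_0}-\im\ell_k}$, and for, say, $\ell_k=N+k$ the coefficients $d_j$ have size of order $N^{m-1}$, which cannot be absorbed into a numerical constant. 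The classical way around both difficulties (Tur\'an's, as in Montgomery, Ch.~5) is different: discretize $I$ by an arithmetic progression $t_\nu=t_0+\nu\delta$, pass to the unimodular numbers $z_k=e^{\im\ell_k\delta}$, and construct a polynomial $Q(z)=\sum_{\nu=M+1}^{M+m}c_\nu z^\nu$ with $Q(z_k)=1$ for all $k$ by truncating the power series of $\prod_k(1-z/z_k)^{-1}$; the key bound $\sum_\nu|c_\nu|\le 2^m\binom{M+m}{m}\approx(C/|I|)^{m-1}$ uses only $|z_k|=1$ and never the individual frequencies. Until you carry out a construction of that kind, the proposal does not yet prove the lemma; alternatively, you could follow the paper and simply cite the classical statement, adding only the Parseval reduction.
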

Note that the conclusion of this lemma is usually stated in the form
\[
\max_I |p| \ge \bigl( b|I| \bigr)^{m-1}\, \max_{[-\pi, \pi]} |p|\,.
\]
Since
\[
\sum_{k=1}^m |a_k|
\le \sqrt{m}\, \Bigl( \sum_{k=1}^m |a_k|^2 \Bigr)^{1/2}
= \sqrt{m}\, \Bigl( \frac1{2\pi}\, \int_{-\pi}^\pi |p(t)|^2\, {\rm d}t  \Bigr)^{1/2}
\le \sqrt{m}\, \max_{[-\pi, \pi]} |p|\,,
\]
the version we will be using readily follows from the usual one.

\subsection{The case of few large coefficients}
{\em Given $m\in\bN$ and $r>0$, we assume that, for some $a\in\bC$,
\begin{equation}\label{eq:100}
\# \bigl\{ k\colon |\la_k-a| r^k \ge \delta S(r; P) \bigr\} \le 2m
\end{equation}
and show that for every interval $I\subset \bR$ of length at most $2\pi$,
\begin{equation}\label{eq:large-coeff}
\max_{\theta\in I} \bigl| P(re^{\im\theta} )\bigr| \ge cn^{-1}\, \bigl( b\, |I| \bigr)^{4m+1}\, S(r; P)
\end{equation}
provided  that $\delta= c_1 n^{-2}\, \bigl( b\, |I| \bigr)^{4m+1} $ with a sufficiently small constant
$c_1$}.

\subsubsection{The polynomial $\bar P$}
Put $\bar P (z) = (1-z) P(z)$. We need this polynomial to get rid of the dependence on the value
of $a$. Note that when $a=0$ this polynomial is not needed.

\begin{claim}\label{cl:Pbar}
\begin{equation}\label{eq:claim5}
S(r; \bar P) \ge \frac{1+r}{2(n+1)} S(r; P)\,, \qquad 0< r<\infty\,.
\end{equation}
\end{claim}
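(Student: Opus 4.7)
The plan is to compute the coefficients of $\bar P$ explicitly and then dominate each $|\la_k| r^k$ by a partial sum of $S(r;\bar P)$ using two complementary telescoping representations, one suited to $r\le 1$ and the other to $r\ge 1$. Expanding
\[
\bar P(z) = (1-z) P(z) = \mu_0 + \mu_1 z + \ldots + \mu_{n+1} z^{n+1},
\]
we have $\mu_0=\la_0$, $\mu_j=\la_j-\la_{j-1}$ for $1\le j \le n$, and $\mu_{n+1}=-\la_n$. Hence for every $k\in\{0,\ldots,n\}$
\[
\la_k \;=\; \sum_{j=0}^k \mu_j \;=\; -\sum_{j=k+1}^{n+1}\mu_j,
\]
which by the triangle inequality gives the two bounds $|\la_k|\le \sum_{j=0}^k|\mu_j|$ and $|\la_k|\le \sum_{j=k+1}^{n+1}|\mu_j|$.

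For $r\le 1$, I would use the first bound: multiplying by $r^k$ and noting that $r^k\le r^j$ for $j\le k$ gives
\[
|\la_k|\, r^k \;\le\; \sum_{j=0}^k |\mu_j|\, r^k \;\le\; \sum_{j=0}^k |\mu_j|\, r^j \;\le\; S(r;\bar P).
\]
Summing over $k=0,\ldots,n$ yields $S(r;P)\le (n+1)\, S(r;\bar P)$. For $r\ge 1$, the second bound is the right one: multiplying now by $r^{k+1}$ and using $r^{k+1}\le r^j$ for $j\ge k+1$ gives
\[
|\la_k|\, r^{k+1} \;\le\; \sum_{j=k+1}^{n+1} |\mu_j|\, r^{k+1} \;\le\; \sum_{j=k+1}^{n+1} |\mu_j|\, r^j \;\le\; S(r;\bar P),
\]
and summation yields $r\, S(r;P)\le (n+1)\, S(r;\bar P)$.

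Putting the two cases together, $S(r;\bar P)\ge \dfrac{\max(1,r)}{n+1}\, S(r;P)$ for all $r>0$, and since $\max(1,r)\ge \tfrac{1+r}{2}$, the desired inequality $S(r;\bar P)\ge \tfrac{1+r}{2(n+1)}S(r;P)$ follows. The whole argument is elementary; the only thing one must notice is that the two telescoping representations of $\la_k$ must be paired with opposite weight shifts ($r^k$ and $r^{k+1}$) so that the monomial inequalities go the right way on the two sides of $r=1$. That observation is the only non-routine step.
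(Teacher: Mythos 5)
Your proof is correct, and it takes a cleaner route than the paper's. The paper manipulates $S(r;\bar P)$ directly by inserting weights $(n+1-k)/(n+1)$ (for $r\le 1$) or $k/(n+1)$ (for $r\ge 1$) into the sum $\sum |\la_k-\la_{k-1}|r^k$ and then telescoping after applying the triangle inequality term by term; the $1/(n+1)$ loss comes from the weight normalization. You instead bound each individual $|\la_k|r^k$ by $S(r;\bar P)$ via the two partial-sum representations $\la_k=\sum_{j\le k}\mu_j=-\sum_{j>k}\mu_j$, choosing whichever side makes the monomial inequality $r^k\le r^j$ (or $r^{k+1}\le r^j$) go the right way, and then sum over $k$; the $1/(n+1)$ comes simply from the number of summands. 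Both hinge on the telescoping structure of $(1-z)P(z)$ and on splitting at $r=1$, but your version avoids the weighted-sum gymnastics and makes transparent where the factor $n+1$ and the extra factor of $r$ (for $r\ge 1$) come from, at the cost of landing on $\max(1,r)$ rather than $(1+r)/2$ directly — a harmless difference since $\max(1,r)\ge(1+r)/2$, as you note.
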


\begin{proof}
First, assume that $0<r\le 1$. Then
\begin{eqnarray*}
S(r; \bar P) &=& |\la_0| + \sum_{k=1}^n | \la_k - \la_{k-1} | r^k + |\la_n| r^{n+1} \\
&\ge& \frac1{n+1} \Bigl[ (n+1)|\la_0| + \sum_{k=1}^n (n+1-k) |\la_k-\la_{k-1} | r^k \Bigr] \\
&\ge& \frac1{n+1} \Bigl[ (n+1)|\la_0| + \sum_{k=1}^n (n+1-k) |\la_k| r^k
- \sum_{k=1}^n (n+1-k) |\la_{k-1}| r^k \Bigr] \\
&=& \frac1{n+1} \Bigl[ (n+1)|\la_0| + \sum_{k=1}^{n-1} \bigl( (n+1-k) - (n-k) r \bigr) |\la_k| r^k
+ | \la_n| r^n - n |\la_0| r \Bigr] \\
&=& \frac1{n+1} \Bigl[ \sum_{k=0}^{n-1} \bigl( (n+1-k) - (n-k) r \bigr) |\la_k | r^k + | \la_n| r^n \Bigr]\,.
\end{eqnarray*}
For $0<r\le 1$, we have $(n+1-k) - (n-k)r \ge 1$. Thus, the RHS of the previous estimate is
\[
\ge \frac1{n+1} \sum_{k=0}^n |\la_k|r^k \stackrel{0<r\le 1}\ge \frac{1+r}{2(n+1)} S(r; P)\,.
\]

Now, let $1\le r < \infty$. Then
\begin{eqnarray*}
S(r; \bar P) &=& |\la_0| + \sum_{k=1}^n | \la_k - \la_{k-1} | r^k + |\la_n| r^{n+1} \\
&\ge& \frac1{n+1} \Bigl[ \sum_{k=1}^n k \bigl( |\la_{k-1}| r^k - |\la_k| r^k \bigr)
+ (n+1) |\la_n| r^{n+1} \Bigr] \\
&=& \frac1{n+1} \sum_{k=0}^n \bigl( (k+1)r - k \bigr) |\la_k|r^k \,.
\end{eqnarray*}
Since $r\ge 1$, we have $(k+1)r - k \ge r$, and therefore, the RHS of the previous estimate is
\[
\ge \frac{r}{n+1} \sum_{k=0}^n |\la_k|r^k \stackrel{r\ge 1}\ge \frac{1+r}{2(n+1)} S(r; P)\,,
\]
proving the claim.
\end{proof}

\subsubsection{Proof of the lower bound~\eqref{eq:large-coeff} assuming~\eqref{eq:100}}

First, we observe that
\begin{equation}\label{eq:110}
\# \bigl\{ k\colon |\bar\la_k|r^k \ge 2\delta (1+r)S(r; P) \bigr\} \le 4m+2\,,
\end{equation}
where $\bar\la_k$ are coefficients of the polynomial $\bar P$.
Indeed,
\[
\bar P(z) = \la_0 + \sum_{k=1}^n (\la_k - \la_{k-1})z^k - \la_n z^{n+1}\,.
\]
Suppose that for some $k\in\bigl\{1,\, ...\,, n\bigr\}$ and $\delta>0$,
\[
|\la_k -\la_{k-1}| r^k \ge 2\delta (1+r) S(r; P)\,.
\]
Then
\[
|\la_k -  a|r^k  + |\la_{k-1} -  a| r^k \ge 2\delta (1+r) S(r; P)\,.
\]
That is, at least one of the following estimates holds: either
$ |\la_k -  a|r^k \ge \delta (1+r) S(r; P)$, or
$ |\la_{k-1} - a|r^k \ge \delta (1+r) S(r; P)$, proving~\eqref{eq:110}.

\medskip
Now, we split the polynomial $\bar P$ into large and small parts.
The small part $\bar P_{\tt sm}$ will consists of the terms $\bar\la_k r^k$ with
\[
|\bar\la_k|r^k \le 2\delta (1+r) S(r; P)\,.
\]
The rest goes to the large part $\bar P_{\tt la}$, which is a sum of at most $4m+2$ terms.
Using Tur\'an's lemma, we get
\begin{eqnarray*}
\max_{\theta\in I} \bigl| P(re^{\im\theta}) \bigr|
&\ge& (1+r)^{-1}  \max_{\theta\in I} \bigl| \bar P(re^{\im\theta}) \bigr| \\
&\ge& (1+r)^{-1} \Bigl[ \max_{\theta\in I} \bigl| \bar P_{\tt la}(re^{\im\theta}) \bigr|
-  \max_{\theta\in I} \bigl| \bar P_{\tt sm}(re^{\im\theta}) \bigr|\Bigr] \\
&\ge& (1+r)^{-1} \Bigl[ (b\,|I|)^{4m+1} S(r; \bar P_{\tt la}) - S(r; \bar P_{\tt sm}) \Bigr] \\
&\ge& (1+r)^{-1} \Bigl[ (b\,|I|)^{4m+1} \bigl( S(r; \bar P) - (n+1)\, 2\delta (1+r) S(r; P) \bigr) \\
&& \qquad \qquad \qquad \qquad \qquad \qquad   - (n+1)\, 2\delta (1+r) S(r; P) \Bigr] \\
&\stackrel{\eqref{eq:claim5}}\ge&
\Bigl[ (b\,|I|)^{4m+1} \Bigl( \frac1{2(n+1)} - 2(n+1)\delta\Bigr) - 2(n+1)\delta \Bigr]\, S(r; P) \\
&\ge& \Bigl[ (b\, |I|)^{4m+1} \Bigl( \frac1{4n} - 4n\delta \Bigr) - 4n\delta \Bigr]\, S(r; P)\,.
\end{eqnarray*}
Choosing $\delta = c_1 n^{-2} \bigl( b\, |I| \bigr)^{4m+1}$ with a sufficiently small constant $c_1$, we see that
the RHS of the previous estimate is
$ \ge c_2 n^{-1} \bigl( b\,|I| \bigr)^{4m+1}\, S(r; P) $, proving~\eqref{eq:large-coeff}.
\hfill $\Box$

\subsection{The dangerous configurations are rare}
Fix an interval $I\subset\bR$ of length at most $2\pi$ and fix $\delta$ as above.
Taking into account what we have just proven,
we see that in order to prove Theorem~\ref{thm:lower-bound},
we need to estimate the number of sign sequences $\sigma\in \{+,-\}^{n+1}$ such that
{\em there exist at least $2m$  $(\le n)$ indices
$k$ satisfying
\begin{equation}\label{eq:120}
\bigl| \la_k^{\sigma_k} -a \bigr| r^k \ge \delta S(r; P)
\end{equation}
but still
\begin{equation}\label{eq:125}
\max_{\theta\in I} \bigl| P(re^{\im\theta})\bigr| \le \delta_1 S(r; P)
\end{equation}
with some positive $\delta_1 \ll \delta $ to be chosen momentarily}.
We call the corresponding sequence of signs $\sigma$ {\em dangerous} and aim to show that
the number of dangerous sequences does not exceed $2^{n+1-m}$.

Take any dangerous sign sequence $\sigma$ and an $m$-element subset of the set of ``large coefficients''
that appear in condition~\eqref{eq:120}, and flip all the signs $\sigma_k$
corresponding to this $m$-element subset. Running over all possible $m$-elements subsets
of the set of ``large coefficients'' of a given dangerous sign sequence $\sigma$, we obtain at least
$ {2m \choose m} \ge 2^m $ different sign sequences.
Claim~\ref{claim:dangerous} (given few lines below) will yield that
{\em all new sign sequences obtained from all dangerous sign sequences $\sigma$ are different,
provided that the parameter $\delta_1$ is chosen as}
\begin{equation}\label{eq:130}
\delta_1 = \tfrac14 \delta\, \bigl( b\, |I| \bigr)^{2m-1}\,.
\end{equation}
Therefore, with the choice of the parameters as in~\eqref{eq:130}, {\em the total number of all dangerous sign sequences multiplied by  $ 2m \choose m $ cannot
exceed} $2^{n+1}$. At the same time, for any non-dangerous $\sigma$, we automatically have
\[
\max_{\theta\in I} \bigl| P(re^{\im\theta}) \bigr| >
\frac{c_1}4\, n^{-2}\, \bigl( b\, |I| \bigr)^{4m+1} \cdot \bigl( b\, |I| \bigr)^{2m-1}\, S(r; P)
> n^{-2} \bigl( c\, |I| \bigr)^{6m}\, S(r; P)\,.
\]
Therefore, Theorem~\ref{thm:lower-bound} follows if we prove the following claim:

\begin{claim}\label{claim:dangerous}
Let $\sigma\in\{+,-\}^{n+1}$ be any sign sequence. Suppose that there exist
two different $m$-element subsets $U_1, U_2 \subset \{0, 1, 2, ...\, n\}$, $U_1\ne U_2$, so that
the sets of flips corresponding to $U_1$ and $U_2$ turn $\sigma$ into a dangerous sign sequence with all
coefficients corresponding to flipped signs becoming ``large'' as in condition~\eqref{eq:120}.
Then the parameter $\delta_1$ cannot be as small as in~\eqref{eq:130}.
\end{claim}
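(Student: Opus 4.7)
The plan is to derive a contradiction by applying Tur\'an's lemma to the difference of the two ``flipped'' polynomials. Let $\sigma^{(i)}$ denote the sign sequence obtained from $\sigma$ by flipping the signs at the positions in $U_i$, let $P^{(i)}(z)=\sum_k\la_k^{\sigma^{(i)}_k}z^k$, and set $S_i=S(r;P^{(i)})$. Since both $\sigma^{(1)}$ and $\sigma^{(2)}$ are dangerous,
\[
\max_{\theta\in I}\bigl|P^{(i)}(re^{\im\theta})\bigr|\le\delta_1 S_i,\qquad i=1,2,
\]
so $Q:=P^{(1)}-P^{(2)}$ satisfies the upper bound $\max_{\theta\in I}|Q(re^{\im\theta})|\le\delta_1(S_1+S_2)$.

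Next I would estimate $Q$ from below via its coefficients. Because $\sigma^{(1)}$ and $\sigma^{(2)}$ coincide outside $U_1\triangle U_2$, the polynomial $Q$ is lacunary, supported on $U_1\triangle U_2$ of cardinality $2\ell$, where $\ell:=|U_1\setminus U_2|=|U_2\setminus U_1|\ge 1$. For $k\in U_1\triangle U_2$, the $k$-th coefficient $c_k$ of $Q$ has modulus $|\la_k^+-\la_k^-|$, which by property~$(\Theta)$ is at least $\tfrac12|\la_k^{-\sigma_k}-a|$. The key observation is that for $k\in U_1\setminus U_2$ the value $\la_k^{-\sigma_k}$ is precisely the \emph{flipped} coefficient of $P^{(1)}$ at position $k$, so by the ``largeness'' assumption in~\eqref{eq:120} relative to $P^{(1)}$ we obtain $|c_k|\,r^k\ge\tfrac{\delta}{2}S_1$; a symmetric argument gives $|c_k|\,r^k\ge\tfrac{\delta}{2}S_2$ for $k\in U_2\setminus U_1$. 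Summing,
\[
\sum_{k\in U_1\triangle U_2}|c_k|\,r^k\;\ge\;\tfrac{\ell\delta}{2}(S_1+S_2).
\]

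Finally I would apply Lemma~\ref{lemma:Turan} to the trigonometric polynomial $\theta\mapsto Q(re^{\im\theta})=\sum_{k\in U_1\triangle U_2}c_k r^k e^{\im k\theta}$, whose $2\ell$ frequencies are distinct integers, obtaining
\[
\max_{\theta\in I}|Q(re^{\im\theta})|\;\ge\;(b|I|)^{2\ell-1}\sum_{k\in U_1\triangle U_2}|c_k|\,r^k\;\ge\;\tfrac{\ell\delta}{2}(b|I|)^{2\ell-1}(S_1+S_2).
\]
Combining with the upper bound and cancelling the positive factor $S_1+S_2$ yields $\delta_1\ge\tfrac{\ell\delta}{2}(b|I|)^{2\ell-1}$; in the relevant regime $b|I|\le 1$, even the worst case $\ell=m$ gives $\delta_1\ge\tfrac{\delta}{2}(b|I|)^{2m-1}$, strictly larger than $\tfrac14\delta(b|I|)^{2m-1}$, contradicting~\eqref{eq:130}. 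I do not anticipate a serious obstacle: the only delicate point is that Tur\'an's lemma must be applied with exponent $2\ell-1$ matching the actual number of nonzero terms of $Q$, which is ensured since property~$(\Theta)$ combined with $|\la_k^{-\sigma_k}-a|>0$ at each flipped index forces each of the $2\ell$ coefficients of $Q$ to be nonzero.
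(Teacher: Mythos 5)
Your proof is correct and follows essentially the same route as the paper: form the lacunary difference $P_1-P_2$ supported on $U_1\bigtriangleup U_2$, bound its sup-norm on $I$ from above using dangerousness of both flipped sequences, and from below via Tur\'an's lemma, using property~$(\Theta)$ together with the largeness condition~\eqref{eq:120} to relate $S(r;P_1-P_2)$ to $S(r;P_j)$. The only cosmetic difference is that the paper singles out one index in each of $U_1\setminus U_2$ and $U_2\setminus U_1$ and compares to two terms of $S(r;P_1-P_2)$, whereas you lower-bound every coefficient of $P_1-P_2$ and sum; both routes yield the same contradiction with~\eqref{eq:130}.
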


\begin{proof} Once again, we will rely on Tur\'an's lemma.
We fix the sign sequence $\sigma$ and
denote by $\sigma^1$, $\sigma^2$ the flipped sign sequences, i.e.,
\[
\sigma_k^j =
\begin{cases}
-\sigma_k \quad &{\rm for\ } k\in U_j, \\
\sigma_k \quad &{\rm for\ } k\notin U_j.
\end{cases}
\]
By $P_j$, $j=1, 2$, we denote
the corresponding polynomials.
Choosing $k_1\in U_1\setminus U_2$ and $k_2\in U_2\setminus U_1$, we have
\[
\max_{\theta\in I} \bigl| P_j (re^{\im\theta})\bigr| \stackrel{\eqref{eq:125}}\le \delta_1 S(r; P_j)
\stackrel{\eqref{eq:120}}\le \frac{\delta_1}{\delta}\, \bigl| \la_{k_j}^{\sigma_{k_j}^j} - a \bigr|\, r^{k_j}\,, \qquad j=1, 2\,.
\]
Therefore,
\begin{equation}\label{eq:140}
\max_{\theta\in I} \bigl| ( P_1 - P_2)(re^{\im\theta}) \bigr|
\le \frac{\delta_1}{\delta}\,
\Bigl[ \bigl| \la_{k_1}^{\sigma_{k_1}^1} - a \bigr|\, r^{k_1}
+ \bigl| \la_{k_2}^{\sigma_{k_2}^2} - a \bigr|\, r^{k_2}
\Bigr]\,.
\end{equation}

On the other hand, the difference $P_1-P_2$ has at most $2m$ terms:
\[
\bigl( P_1-P_2 \bigr)(z)
= \sum_{k\in U_1 \bigtriangleup U_2} \bigl( \la_k^{\sigma_k^1} - \la_k^{\sigma_k^2}\bigr) z^k
\]
where, as usual, $ \bigtriangleup $ denotes the symmetric difference. For $k\in U_1 \bigtriangleup U_2$,
we have $\sigma_k^2 = - \sigma_k^1$. Then, by assumption~(b) in Section~\ref{subsect:reduction},
\[
\bigl| \la_k^{\sigma_k^1} - \la_k^{\sigma_k^2} \bigr|
\ge \frac12\, \Bigl[ \bigl| \la_k^{\sigma_k^1} - a \bigl| + \bigl| \la_k^{\sigma_k^2} - a \bigr| \Bigr]\,,
\qquad k\in U_1 \bigtriangleup U_2\,.
\]
In particular, this holds for $k=k_1, k_2$. Therefore, the RHS of~\eqref{eq:140} is
\[
\le \frac{2\delta_1}{\delta}\,
\Bigl[ \bigl| \la_{k_1}^{\sigma_{k_1}^1} - \la_{k_1}^{\sigma_{k_1}^2} \bigr|\, r^{k_1}
+ \bigl| \la_{k_2}^{\sigma_{k_2}^1} - \la_{k_2}^{\sigma_{k_2}^2} \bigr|\, r^{k_2} \Bigr]
\le \frac{2\delta_1}{\delta}\, S(r; P_1 - P_2)\,.
\]
If $\delta_1$ is as small as in~\eqref{eq:130}, this contradicts Tur\'an's lemma applied to
$P_1-P_2$. This proves the claim and finishes off the proof of Theorem~\ref{thm:lower-bound}.
\end{proof}

\section{Proof of Theorem~\ref{thm:main}}

\subsection{Preliminaries}

\subsubsection{}
First, we observe that it suffices to prove Theorem~\ref{thm:main} only for
zeroes of $P$ lying in the closed unit disk $\{ |z|\le 1 \}$.
To get the result for the rest of the zeroes, all one needs is to consider the polynomial
$P^*(z) = z^n P(z^{-1})$.

\subsubsection{}
It will be convenient to make the exponential change of variable
$z=e^{-2\pi w}$, $w=t+\im s$, $0\le t < \infty$,
and to deal with the exponential polynomial
\[
Q(w) = P(z) = \sum_{k=0}^n \la_k  e^{-2\pi kw}\,.
\]

\subsubsection{} Put
\[
h(t) = \max_{0 \le k \le n} \bigl( \log |\la_k| - 2\pi k t \bigr)\,, \qquad H(t) = e^{h(t)}\,.
\]
By $\nu (t)$ we denote {\em the central index}, that is, the largest of the indices $\nu$, for which
\[
\log |\la_\nu| - 2\pi \nu t \ge \log |\la_{k}| - 2\pi k t\,, \qquad k\in\{0, 1, 2, \,...\, n \}\,.
\]

Obviously, $H(t) \le S(e^{-2\pi t}; P) \le (n+1) H(t)$. This will allow us to replace $S$ by $H$ in our estimates.
The advantage of $H$ over $S$ is that the former has sharper transitions at the points where the central index changes
its value.

\subsubsection{}\label{subsub:lower-bound}
In the new notation, Theorem~\ref{thm:lower-bound} says that {\em given $t\ge 0$, given an interval
$I=[s', s'']$ of length less than $1$, and given a positive integer parameter $m$,
there exists an event
\[
\Sigma (t, I, m) \subset \{+,-\}^{n+1}\,, \qquad {\rm with} \quad \cP \bigl( \Sigma (t, I, m) \bigr) \le 2^{-m}
\]
such that for every $ \sigma\in \{+, -\}^{n+1}\setminus\Sigma (t, I, m)  $,}
\begin{equation}\label{eq:lower-bd}
\max_{s\in I} \bigl| Q(t+{\im} s) \bigr| \ge n^{-2} (c\, |I| )^{6m} H(t)\,.
\end{equation}
This estimate is complemented by the obvious upper bound
\begin{equation}\label{eq:upper-bd}
\max_s \bigl| Q(t+{\im} s) \bigr| \le (n+1) H(t)\,.
\end{equation}

\subsection{The test sets and exceptional sign sequences}
Our exceptional event $\Sigma\subset \bigl\{ +, - \bigr\}^{n+1}$ will be a union of the events
$ \Sigma (t, I, m) $ taken over a certain finite sets of ``test points'' $t$ and ``test intervals'' $I$.
So we start by defining these sets.

\subsubsection{}
Recall that each $\la_k$ attains two values, consider the $2n+2$ lines
$  t\mapsto \log|\la_k^{\pm}|-2\pi k t $, $0 \le k \le n $.
There are at most $ {2n+2 \choose 2} = (n+1)(2n+1) $ points on $[0, \infty)$ where two of these functions are equal.
We denote this set of points by $\mathfrak T_0$. Then we put
\[
\mathfrak T = \Bigl\{ t= \frac{j}{n}\colon {\rm dist\ }(t, \mathfrak T_0) \le 1, \ j\in \bZ_+ \Bigr\}\,.
\]
This will be our set of {\em test points} $t$.

Put
\[
\mathfrak S = \Bigl\{ s=\frac{k}{n}\colon 0\le k < n, \ k\in \bZ_+ \Bigr\}\,.
\]
The set $\mathfrak I$ of {\em test intervals} $I$ will consist of all
intervals centered at all the points $s\in\mathfrak S$, of length $j/n$ with $1\le j \le n$, $j\in \bN$.

\begin{claim}\label{claim:test-pairs}
The cardinality of the set $\mathfrak T$ is $ \le C n^3 $.
The cardinality of the set $\mathfrak J$ is $\le Cn^2$.
\end{claim}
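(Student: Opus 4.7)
The plan is to prove each bound by an elementary counting argument, treating the two statements separately.

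For the bound on $|\mathfrak T|$, I would first estimate $|\mathfrak T_0|$. Recall that $\mathfrak T_0$ consists of points in $[0,\infty)$ where two of the $2n+2$ affine functions $t\mapsto \log|\la_k^{\pm}|-2\pi k t$ coincide. Two distinct such affine functions (which have different slopes, or the same slope but different intercepts) meet in at most one point, so $|\mathfrak T_0|\le \binom{2n+2}{2}=(n+1)(2n+1)=O(n^2)$. Next, I would observe that $\mathfrak T$ is the set of integer multiples of $1/n$ that lie within distance $1$ of $\mathfrak T_0$, i.e. inside $\bigcup_{u\in\mathfrak T_0}[u-1,u+1]$. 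Each interval $[u-1,u+1]$ has length $2$ and therefore contains at most $2n+1$ multiples of $1/n$. Combining the two bounds,
\[
|\mathfrak T|\le |\mathfrak T_0|\cdot (2n+1)\le (n+1)(2n+1)^2\le Cn^3\,.
\]

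For the bound on $|\mathfrak I|$ (I assume this is what is meant by $\mathfrak J$ in the statement), I would simply multiply the number of allowed centers by the number of allowed lengths. The centers lie in $\mathfrak S=\{k/n\colon 0\le k<n\}$, which has cardinality $n$, and the lengths are of the form $j/n$ with $1\le j\le n$, giving $n$ choices. Hence $|\mathfrak I|\le n\cdot n = n^2\le Cn^2$.

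There is no real obstacle here; the entire content is bookkeeping. The only point to be slightly careful about is justifying $|\mathfrak T_0|\le \binom{2n+2}{2}$: one must check that for each unordered pair of distinct indices $(k,\varepsilon)\ne(k',\varepsilon')$ the two functions $t\mapsto\log|\la_k^{\varepsilon}|-2\pi k t$ and $t\mapsto\log|\la_{k'}^{\varepsilon'}|-2\pi k' t$ agree in at most one point, which is immediate unless $k=k'$, in which case they are parallel and contribute nothing to $\mathfrak T_0$. All remaining steps are straightforward.
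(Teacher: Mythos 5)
Your argument is correct and is essentially the counting that the paper declares ``Obvious''; you simply spell out the details (the $\binom{2n+2}{2}$ bound on $|\mathfrak T_0|$, the $2n+1$ grid points per length-$2$ interval, and the $n\times n$ choices of center and length for $\mathfrak I$), and you correctly read $\mathfrak J$ in the statement as the typo for $\mathfrak I$.
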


\begin{proof} Obvious. \end{proof}

\subsubsection{}
Now, we define the exceptional event $ \Sigma \subset \bigl\{ +, - \bigr\}^{n+1}$. Put
\[
\Sigma (m) = \bigcup_{t\in\mathfrak T, \ I\in\mathfrak I}
\Sigma (t, I, m)
\]
where the events $\Sigma (t, I, m)$ are the same as in~\ref{subsub:lower-bound}.
Then, by the last claim,  $ \cP \bigl( \Sigma (m) \bigr) \le C n^5\, 2^{-m} $.
Now, we fix
\[
m = C(A) \log n
\]
with a sufficiently large value $C(A)$, and let $\Sigma = \Sigma (m)$. Then
$ \cP \bigl( \Sigma \bigl) < n^{-A}$.

\medskip
In the rest of the proof, we fix the sign sequence $\sigma\in\{+, -\}^{n+1}\setminus\Sigma$.
We put $V=V(P)$, where $V(P)$ is the number of vertices on the Newton-Hadamard polygon
introduced in Section~\ref{subsubsub:Newton-Hadamard}.

\subsection{The Whitney-type partition}

\subsubsection{}
For $t\ge 0$, the graph of the function $h(t)$ is a piece-wise linear function with
at most $V+1$ intervals of linearity. Take one of these intervals and call it $\mathbb J$.
The proof of Theorem~\ref{thm:main} needs a special partition of the interval $\mathbb J$.
To construct this partition, we take $L$ as in Theorem~\ref{thm:main}, let $L' = [L]+4$ (as usual, $[L]$ stands for the integer part
of $L$) and take a sequence of closed intervals with disjoint interiors starting in both directions from
the test points in $\mathfrak T \setminus \operatorname{int}(\mathbb J)$ closest to the end points
of $\mathbb J$ so that
\begin{itemize}
\item
the end-points of each interval of this sequence belong to the set $\mathfrak T$ of test-points;
\item
the first $4L'$ intervals starting with each end-point
of $\mathbb J$ have length $\frac2{n}$, the next $4L'$ intervals have length $\frac4{n}$, the next $4L'$
have the length $\frac8{n}$, and so on, until we either
reach length $1$ or cover the middle point of $\mathbb J$
\end{itemize}
(see Fig.~1).
We denote the intervals of this sequence by $J$ and note that
we used at most $CL'\log n$ intervals $J$ per each interval $\mathbb J$.
\begin{sidewaysfigure}[h]
\begin{center}
	\scalebox{1.6}{
\begin{tikzpicture}

  \begin{scope}[shift={(-2.5,0)}]
	  \draw [->] (-4.5,0) -- (7.0,0);
\foreach \x in {-3.75,-3.5,-3.1,-2.45,-1.5,
0.25,1.25,1.75,2,2.25}
\draw (\x,1pt)--(\x,-1pt);

\draw [-] (-4.25,4) -- (-3.65,3.5) node[right] {\tiny \hskip 5pt Graph of $h$};
\draw [-] (-3.65,3.5) -- (2.1,2);
\draw [-] (2.1,2) -- (5, 1.75);

\draw [dashed] (-3.65,0) -- (-3.65,3.5);
\coordinate (A) at (-3.65,0);
    \filldraw (A) circle (1pt) node[below] {\tiny \hskip 15pt $u\in {\mathfrak T}_0$};
\coordinate (AA) at (-1.9,0);
    \filldraw (AA) circle (1pt) node[below] {\tiny $u+1$};
\draw [dashed] (2.1,0) -- (2.1,2);
\coordinate (B) at (2.1,0);
    \filldraw (B) circle (1pt) node[below] {\tiny $v\in {\mathfrak T}_0$};
\coordinate (BB) at (0.35,0);
    \filldraw (BB) circle (1pt) node[below] {\tiny $v-1$};
    \draw [thick,
    decorate, decoration={brace,mirror}] (-1.5,-0.35) -- node[below]
    {\tiny untested region } (0.25,-0.35);
	  \draw [thick, -] (-1.5,0) -- (0.25,0);
    \draw [thick,
    decorate, decoration={brace}] (-3.75,0.15) -- node[above]
    {\tiny left partition of $\mathbb J$ } (-1.5,0.15);
    \draw [thick,
    decorate, decoration={brace}] (0.25,0.15) -- node[above]
    {\tiny right partition of $\mathbb J$ } (2.25,0.15);

    \draw[thick] (-2.7,0) circle (1.3);
    \draw[dashed] (-3.5,-1) -- (-4.5,-2.2);
    \draw[dashed] (-1.9,-1) -- (8.5,-2.2);

   \draw (-4.5,-2.2) .. controls (0,-1.5) .. (8.5,-2.2);
\draw (8.5,-2.2) .. controls (9,-3.5)  .. (8.5,-5);
\draw (8.5,-5) .. controls (0,-6) .. (-4.5, -5);
\draw (-4.5,-5) .. controls (-5,-3.5) .. (-4.5, -2.2);
  \end{scope}


  \begin{scope}[shift={(-2.5,-3)}]
	  \draw [->] (-4.5,0) -- (7.0,0) node[right] {$\cdots$ $\cdots$};
    \draw[<->] (-4.5,0.35) -- node[above] {\tiny $1/n$}
    (-4.25,0.35);
    \draw[<->] (-4,0.35) -- node[above] {\tiny $2/n$}
    (-3.5,0.35);
    \draw[thick, -] (-4,0) --
    (-3.5,0);
    \draw[<->] (-2.5,0.35) -- node[above] {\tiny $4/n$}
    (-1.5,0.35);
    \draw[thick, -] (-2.5,0) --
    (-1.5,0);
    \draw[<->] (1.5,0.35) -- node[above] {\tiny $8/n$}
    (4,0.35);
    \draw[thick, -] (1.5,0) --
    (4,0);
    \draw [thick,
    decorate, decoration={brace, mirror}] (-4.5,-0.45) -- node[below]
    { \tiny $4L'$ intervals } (-2.6,-0.45);
    \draw (-3.5,-0.85) node[below] {\tiny of length $2/n$};
    \draw [thick,
    decorate, decoration={brace, mirror}] (-2.4,-0.45) -- node[below]
    {\tiny $4L'$ intervals } (1.4,-0.45);
    \draw (-0.5,-0.85) node[below] {\tiny of length $4/n$};
    \draw [thick,
    decorate, decoration={brace, mirror}] (1.6,-0.45) -- node[below]
    {\tiny  $4L'$ intervals } (8.5,-0.45);
    \draw (5.1,-0.85) node[below] {\tiny of length $8/n$};
    \coordinate (A) at (-4.35,0);
    \filldraw (A) circle (1pt) node[below] {\tiny
	    $u$};

    \foreach \x in {-4.25,-3.75,-3.25,-2.75,-2.25,-2,-1.75,-1.25,
	-1,-0.75,-0.25,0,0.25,0.75,1,1.25,1.75,2,2.25,2.5,
2.75,3, 3.25,3.5,3.75,4.25,4.5,4.75,5,5.25,5.5,5.75,6,6.25,6.75}
\draw (\x,1pt)--(\x,-1pt);
\foreach \x in {-4,-3.5,-3}
\draw (\x,4pt)--(\x,-4pt);
\foreach \x in {-4.5,-2.5, 1.5, 4,6.5}
\draw (\x,8pt)--(\x,-8pt);

\foreach \x in {-1.5,-0.5,0.5}
\draw (\x,5pt)--(\x,-5pt);

  \end{scope}

\end{tikzpicture}
}
{\caption{Partition of the interval $\mathbb J=[u, v]$ with $u, v\in\mathfrak T_0$;
test points are within distance $1/n$ of each other. This figure corresponds to
the (impossible) value $L'=1$.}}
\label{fig:WhitneyPartition}
\end{center}
\end{sidewaysfigure}
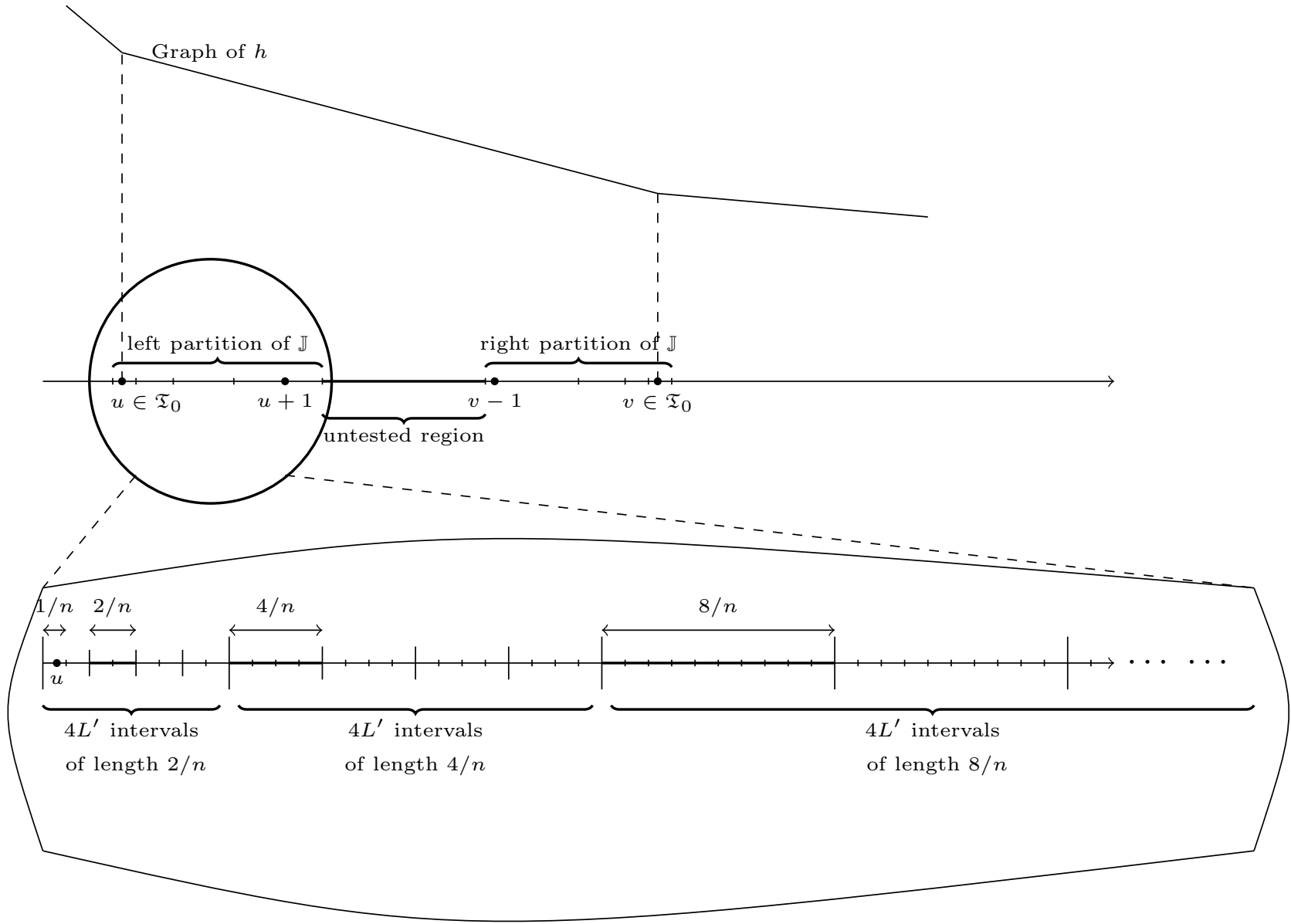

Next, we list several properties of this construction, which will be used in the
proof of Theorem~\ref{thm:main}.

\subsubsection{}\label{subsubsect:tested-pairs}
The centers $c_J$ of the intervals $J$ belong to the set $\mathfrak T$ of tested points.
The intervals $I=\bigl[ s - \frac12 |J|, s + \frac12 |J| \bigr]$, $s\in \mathfrak S$,
belong to the set $\mathfrak I$ of tested intervals.

\subsubsection{}\label{subsubsect:J'}
By $J'$ we denote the interval centered at $c_J$ which is $L'$ times longer than $J$. Then, by construction,
if $J$ is an interval from our partition with $|J|\ge \frac4{n}$, then $J'\subset \mathbb J$.

\subsection{There are no zeroes in the strips with the untested ground}

By $\mathbb J_0$ we denote the part of $\mathbb J$ that remains uncovered by intervals $J$ and call it
{\em the untested part of $\mathbb J$}. For some intervals $\mathbb J$, the untested part $\mathbb J_0$
can be void. Let $\Pi_{\mathbb J_0} = \bigl\{t+\im s\colon t\in \mathbb J_0 \bigr\}$ be the corresponding
vertical strip.

\begin{claim}\label{claim:untested}
The exponential polynomial $Q$ does not vanish on all vertical strips
$\Pi_{\mathbb J_0} $.
\end{claim}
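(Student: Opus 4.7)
The plan is to prove this claim by a direct triangle-inequality argument, treating the central term as the dominant one and showing that the sum of all non-central terms is negligible on $\Pi_{\mathbb{J}_0}$. Since $\mathbb{J}$ is a maximal interval of linearity of the piecewise-linear function $h$, the central index $\nu(t)$ is constant on $\mathbb{J}$; call its value $\nu$. Then $H(t)=|\la_\nu|e^{-2\pi\nu t}$ throughout $\mathbb{J}$, and $|\la_k|e^{-2\pi k t}\le H(t)$ for every $k$ and every $t\in\mathbb{J}$. Writing $\mathbb{J}=[u,v]$ and factoring exponentials, one gets the quantitative bound
\[
\frac{|\la_k|e^{-2\pi kt}}{H(t)}\le e^{-2\pi(k-\nu)(t-u)}\quad\text{if }k>\nu,
\qquad
\frac{|\la_k|e^{-2\pi kt}}{H(t)}\le e^{-2\pi(\nu-k)(v-t)}\quad\text{if }k<\nu,
\]
obtained simply by evaluating the ratio $|\la_k|e^{-2\pi kt}/|\la_\nu|e^{-2\pi\nu t}$ at the relevant endpoint of $\mathbb{J}$, where it is already $\le 1$, and tracking its exponential decay as $t$ moves into the interior.

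Next I would exploit the Whitney partition structure. The partition covers a buffer region near each endpoint of $\mathbb{J}$ consisting of $4L'$ intervals at each dyadic length scale $2/n, 4/n, 8/n,\ldots$ up to length $1$. If the untested set $\mathbb{J}_0$ is nonempty at all, the partition must have terminated by reaching the length-$1$ level on both sides; in that case the total length of the buffer on each side is at least $4L'\cdot 1 = 4L'\ge 16$ (since $L'\ge 4$). Consequently, for any $t\in\mathbb{J}_0$, both distances $d_u:=t-u$ and $d_v:=v-t$ are bounded below by a quantity $\ge 1$.

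Combining these two ingredients, for any $w=t+\im s$ with $t\in\mathbb{J}_0$,
\[
\sum_{k\ne\nu}\frac{|\la_k|e^{-2\pi kt}}{H(t)}
\le \sum_{j\ge 1} e^{-2\pi j d_u}+\sum_{j\ge 1}e^{-2\pi j d_v}
\le \frac{2e^{-2\pi}}{1-e^{-2\pi}}<\tfrac{1}{2}.
\]
Applying the reverse triangle inequality to $Q(w)=\la_\nu e^{-2\pi\nu w}+\sum_{k\ne\nu}\la_k e^{-2\pi kw}$ and using $|e^{-2\pi\nu w}|=e^{-2\pi\nu t}$, one concludes
\[
|Q(w)|\ge |\la_\nu|e^{-2\pi\nu t}-\sum_{k\ne\nu}|\la_k|e^{-2\pi kt}\ge \tfrac{1}{2}H(t)>0,
\]
where positivity of $H(t)$ follows from the non-degeneracy assumption \eqref{eq:nondegen-lambda} and the fact that at least $|\la_0|$ or $|\la_n|$ enters the maximum defining $h$.

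The only potentially delicate step is the buffer-length bookkeeping: one must verify that the geometric stack of $4L'$ intervals of each length $2^j/n$, $j\ge 1$, really does accumulate distance at least $1$ before the partition halts in the nontrivial case where $\mathbb{J}_0\ne\emptyset$. This is straightforward from the construction (and in fact the buffer is comfortably longer than needed, which is why the constant $\tfrac12$ above is not tight); I expect this to be the only place where one has to be careful, and no Tur\'an-type input or probabilistic exceptional event is needed for this claim.
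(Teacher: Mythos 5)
Your proposal is correct and follows essentially the same route as the paper: both arguments show that for $t$ in the untested part the central term $|\la_\nu|e^{-2\pi\nu t}$ dominates the geometric sum of the remaining terms, the only cosmetic difference being that you anchor the decay estimate at the endpoints of $\mathbb J$ (using that $t$ is at distance $\ge 1$ from them), while the paper anchors it at $t\pm 1$ after noting that the central index is fixed on $[t-1,t+1]$. The buffer-length bookkeeping you flag is exactly the point the paper also relies on (and states rather tersely), and your treatment of it is sound.
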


\noindent{\em Proof of Claim~\ref{claim:untested}}:
Suppose that the point $t$ belongs to one of the intervals $\mathbb J_0$,
that is, the central index $\nu$ stays fixed on $[t-1, t+1]$.
Thus, we actually have not only
\begin{align*}
\log |\la_\nu| - 2\pi \nu t &\ge \log |\la_k| - 2\pi k t\,, \\
\intertext{but also}
\log |\la_\nu| - 2\pi \nu t &\ge \log |\la_k| - 2\pi k t + 2\pi |k-\nu|\,.
\end{align*}
Then
\begin{align*}
\Bigl| \sum_{k\ne \nu} \la_k e^{-2\pi k t} \Bigr|
&\le \sum_{k\ne\nu} |\la_k| e^{-2\pi k t} \\
&\le |\la_\nu| e^{-2\pi \nu t}\, \sum_{k\ne\nu} e^{-2\pi |k-\nu|}
= |\la_\nu| e^{-2\pi \nu t} \cdot 2 \sum_{k\ge 1} e^{-2\pi k} < |\la_\nu| e^{-2\pi \nu t}\,.
\end{align*}
Hence, $Q$ cannot vanish on the vertical line $t+\im\bR$, and therefore, on the whole vertical strip
$\Pi_{\mathbb J_0}$. \hfill $\Box$

\subsection{Jensen's bound for the number of zeroes of $Q$ in the disks $\bar D_{J, s}$}

Given interval $J$ from our partition and $s\in\mathfrak S$, consider the disks
\[
D_{J, s}=\bigl\{w\colon |w-(c_J+{\rm i}s)|<\tfrac12 L'|J| \bigr\},
\]
and denote by $N(\bar D_{J, s}; Q)$ the number of zeroes of $Q$ in the closed disk
$ \bar D_{J, s} $ counted with multiplicities.

\begin{claim}\label{claim:Jensen}
We have
\[
N(\bar D_{J, s}; Q) \le C(A, L) \log^2 n\,.
\]
\end{claim}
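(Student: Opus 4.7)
\noindent\emph{Plan.}
The idea is to normalise $Q$ by its dominant monomial on $\mathbb J$ and then apply Jensen's formula on a disk slightly bigger than $\bar D_{J,s}$, using the lower bound~\eqref{eq:lower-bd} at one point inside it.

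First I would introduce $F(w) = \lambda_\nu^{-1} Q(w) e^{2\pi\nu w}$, where $\nu$ is the central index on $\mathbb J$. The normaliser vanishes nowhere, so $F$ and $Q$ share their zero set. Writing $h_\nu(t) = \log|\lambda_\nu| - 2\pi\nu t$ and using $|Q(w)| \le (n+1)\, e^{h(t)}$ at $t = \operatorname{Re} w$ gives
\[
|F(w)| \le (n+1)\, \exp\bigl[ h(t) - h_\nu(t) \bigr].
\]
Because $h$ is the maximum of the affine functions $\log|\lambda_k| - 2\pi k t$ and coincides with $h_\nu$ throughout $\mathbb J$, this exponent vanishes when $t \in \mathbb J$; hence $|F| \le n+1$ on any vertical strip whose real part lies in $\mathbb J$.

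Next, because $c_J \in \mathfrak T$ and $I := [s - |J|/2,\, s + |J|/2]$ lies in $\mathfrak I$ with $|I| = |J| < 1$, the hypothesis $\sigma \notin \Sigma \supset \Sigma(c_J, I, m)$ together with the lower bound~\eqref{eq:lower-bd} produces a point $w_* = c_J + \im s^*$, $s^* \in I$, at which $|Q(w_*)| \ge n^{-2}(c|J|)^{6m}\, H(c_J)$. Since $c_J \in \mathbb J$ gives $e^{h_\nu(c_J)} = H(c_J)$, this translates into $|F(w_*)| \ge n^{-2}(c|J|)^{6m}$. I would then apply Jensen's formula to $F$ on $\{|w - w_*| \le R\}$ with $R = (L'+1)|J|$: every $\zeta \in \bar D_{J,s}$ satisfies $|\zeta - w_*| \le \tfrac12 L'|J| + \tfrac12|J| = R/2$, so each such zero contributes at least $\log 2$, and one obtains
\[
N(\bar D_{J,s}; Q)\, \log 2 \;\le\; \log(n+1) + \max_{|t - c_J| \le R}\bigl( h(t) - h_\nu(t) \bigr) + 2\log n + 6m\, \log\tfrac{1}{c|J|}.
\]
Since $|J| \ge 2/n$ and $m = C(A)\log n$, everything here is $O_A(\log^2 n)$ except possibly the overshoot term $\max(h - h_\nu)$ on the horizontal strip $[c_J - R, c_J + R]$.

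The main obstacle is therefore to show this overshoot is bounded by a constant depending only on $L$; this is precisely what the doubling Whitney partition is engineered to achieve, which is why each block contains $4L'$ intervals and $L' = [L]+4$. For any $J$ with $|J| \ge 4/n$, $c_J$ sits at distance $\ge 4L'|J| > R$ from the endpoint of $\mathbb J$ from which its partition started; splitting the analysis of the opposite side into the case $|\mathbb J| \le 16L'$ (both partitions meet at the midpoint, with the maximal $|J|$ in the partition being $\le |\mathbb J|/(16L')$) and the case $|\mathbb J| > 16L'$ (each side reaches length $1$ and stops well before the midpoint) one verifies that $c_J + R \in \mathbb J$ as well, so $[c_J - R, c_J + R] \subset \mathbb J$ and the overshoot vanishes identically. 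For the first-block intervals with $|J| = 2/n$, the strip can protrude from $\mathbb J$ by at most $R = O(1/n)$; since $h - h_\nu$ grows outside $\mathbb J$ with slope bounded by $2\pi n$, on such a protrusion $h - h_\nu \le O(L)$. Plugging back yields the asserted $N(\bar D_{J,s}; Q) \le C(A, L)\log^2 n$.
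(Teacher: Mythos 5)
Your proof follows essentially the same route as the paper: normalize $Q$ by the dominant monomial $e^{2\pi\nu w}$ on $\mathbb J$ (the paper uses $F = Qe^{2\pi\nu w}$; your extra division by $\lambda_\nu$ is cosmetic), apply Jensen's bound on a disk roughly twice the size of $\bar D_{J,s}$ using the lower bound~\eqref{eq:lower-bd} at a tested point and the upper bound~\eqref{eq:upper-bd}, then control the overshoot $h - h_\nu$ via the geometry of the Whitney partition, splitting into $|J|\ge 4/n$ (Jensen disk projects into $\mathbb J$, so no overshoot) and $|J|=2/n$ (protrusion is $O(L/n)$ and $h$ has slope $\ge -2\pi n$, so overshoot is $O(L)$). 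A couple of your intermediate constants are off — the distance from $c_J$ to the near endpoint for $|J|\ge 4/n$ is roughly $2L'|J|$ rather than $4L'|J|$, and $|J_{\max}|\lesssim |\mathbb J|/(4L')$ rather than $|\mathbb J|/(16L')$ — but since $L' = [L]+4 \ge 4$ the partition has ample margin and the inclusions $[c_J - R, c_J + R]\subset\mathbb J$ still hold, so the conclusion is unaffected.
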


\medskip
The proof of this claim relies upon ``the classical Jensen's bound''\footnote{
For the reader's convenience, we recall its short proof, assuming, without lost of generality, that $D$ is the unit disk.
Let $a_1$, ..., $a_N$ be zeroes of $F$ in $\frac12 \bar D$ counted with multiplicities, and let
\[
B_a(z) = \frac{z-a}{1-z\bar a}\,.
\]
Then $F=B_{a_1}\,...\, B_{a_N} G$, where the function $G$ is analytic in $D$
and $\sup_D |F| = \sup_D |G|$. Note that the absolute value of each factor $B_{a_i}$ is bounded by $\frac45$ in $\frac12 \bar D$.
Therefore,
\[
\max_{\frac12 \bar D} |F| \le \bigl( \tfrac45 \bigr)^N \max_{\frac12 \bar D} |G| \le \bigl( \tfrac45 \bigr)^N \sup_{D} |G| =
\bigl( \tfrac45 \bigr)^N \sup_{D} |F|\,,
\]
whence, the estimate. \hfill $\Box$
}:
{\em Let $F$ be an analytic function in a disk $D$. Let
$\tfrac12 D$ be the disk concentric with $D$ but of twice smaller radius.
Then the number of zeroes of $F$ in the closed disk $\tfrac12 \bar D$ (counted with multiplicities)
is}
\[
\le C \log \frac{\sup_D |F|}{\max_{\frac12 \bar D} |F|}\,.
\]

\medskip\noindent{\em Proof of Claim~\ref{claim:Jensen}}:
By~\ref{subsubsect:J'}, the intervals $J\subset\mathbb J$ fall into two categories:
either $J' \subset \mathbb J$, or the length of $J$ is $\frac2{n}$.
First, consider the intervals $J$ from the first group, that is, assume that
the central index $\nu$ stays fixed on $J'$. Take the function
$F(w)=Q(w)e^{2\pi\nu w}$ which has the same zeroes as $Q$.
By~\ref{subsubsect:tested-pairs}, each point $c_J$ and each interval $I_{J, s}=\bigl[ s-\tfrac12|J|, s+\tfrac12 |J|\bigr]$
are tested. Note that $c_J +\im I_{J, s} \subset D_{J, s}$. Therefore, we have the lower bound
\[
\max_{\bar D_{J,s}} |F| \stackrel{\eqref{eq:lower-bd}}\ge
\max_{v\in I_{J,s}} |F(c_J+\im v)| \ge n^{-2}\, \bigl( c\, |I_{J, s}| \bigr)^{6m} H(c_J) \cdot e^{2\pi \nu c_J}
=  n^{-2}\, \bigl( c\, |J| \bigr)^{6m} |\la_\nu|\,.
\]
The matching upper bound
\[
\max_{t+\im v\in 2\bar D_{J, s}} |F(t+\im v)| \stackrel{\eqref{eq:upper-bd}}\le (n+1) \max_{t\in J'} [ H(t)  e^{2\pi \nu t} ] < 2n |\la_\nu|
\]
is evident. Using Jensen's bound, recalling that $|J|\ge\frac4{n}$ and that
$m=C(A)\log n$, we get
\[
N(\bar D_J; F) \le C m \log n \le C(A) \log^2 n\,.
\]

\medskip
Now, we turn to the second case, when $|J|=\frac2{n}$. These intervals are so short that
the function $h$ can change only by a constant (depending on $L'$) on $J'$. Indeed, let $J'=[a, b]$.
Take the points $a=t_0 < t_1 <\,...\,<t_s=b$, so that
$ \nu (t_i+0) = \nu (t_{i+1}-0) = \nu_i $.
Then
\[
h(a) - h(b) = \sum_{i=0}^{s-1} [ h(t_i) - h(t_{i+1})] =  \sum_{i=0}^{s-1} 2\pi \nu_i [t_{i+1}-t_i] \le 2\pi n (b-a) = 4 \pi L'\,;
\]
the estimate in the opposite direction is obvious since the function $h$ does not increase.
Therefore,
$ H(a) / H(b) \le e^{4\pi (L+4)} $.

Then, similarly to the first case, we take the corresponding test intervals $I_{J,s}$, note that
\[
\max_{\bar D_{J, s}} |Q| \ge \max_{I_{J, s}} |Q| \stackrel{\eqref{eq:lower-bd}}\ge n^{-2} (c|I_J|)^{6m} H(c_J)\,,
\]
and that
\[
\max_{2\bar D_{J, s}} |Q| \stackrel{\eqref{eq:upper-bd}}\le (n+1) \max_{J'} H < 2n\, e^{4\pi (L+4)} H(c_J)\,.
\]
Then, applying Jensen's bound to the function $Q$, and recalling that $|I_{J, s}|=|J|= \frac2{n}$ and that $m=C(A) \log n$,
we get $N(\bar D_{J, s}, Q) \le C(A, L) \log^2 n$. This proves Claim~\ref{claim:Jensen}. \hfill $\Box$

\subsection{Completing the proof of Theorem~\ref{thm:main}}

Take an arbitrary $L$-Lipschitz curve
$  \Gamma = \bigl\{t+{\rm i}s(t)\colon 0 \le t <\infty \bigr\} $, $|s(t_1) - s(t_2)|\le L|t_1-t_2|$, and
let $\Gamma_K = \bigl\{ t+\im s(t)\colon t\in K \bigr\}$ be the part of $\Gamma$ that lies over an interval $K$.
Since the curve $\Gamma$ is $L$-Lipschitz and $c_J+{\rm i}s(c_J)\in \Gamma_J$, we see that $\Gamma_J$ does not exit the
rectangle
\[
\bigl\{ t+ \im v\colon t\in J,\, |v-s(c_J)|\le \tfrac12 L|J| \bigr\}\,
\]
Let $s_J'$ be a point in $\mathfrak S$ closest to $s(c_J)$
(if there are two such points, choose any of them). Put $D_J=D_{J, s_J'}$.
Then, $\Gamma_J \subset \bar D_{J}$. Therefore,
\[
\Gamma_\mathbb J \setminus \Pi_{\mathbb J_0}  = \bigcup_{J\subset \mathbb J} \Gamma_J
\subset \bigcup_{J\subset \mathbb J} \bar D_J\,.
\]

By Claim~\ref{claim:untested},
$Q$ does not vanish in the vertical strips $\Pi_{\mathbb J_0}$ generated
by the untested parts $\mathbb J_0$.
Therefore,
\[
N(\Gamma; Q) \le \sum_{\mathbb J}\, \sum_{J\subset\mathbb J} N(\bar D_J; Q)\,,
\]
where $N(\Gamma; Q)$ is the number of zeroes of $Q$ on $\Gamma$.

By Claim~\ref{claim:Jensen}, $N(\bar D_J; Q) \le C(A, L)\log^2 n$.

At last, recall that the number of intervals $J$ used per each interval
$\mathbb J$ is at most $C L \log n $, and that the number of the
intervals $\mathbb J$ where the central index $\nu$ stays fixed is at most $V+1\le 2V$.
All together, this gives us
\[
N(\Gamma; Q) \le C(A, L) V \log^3 n\,,
\]
completing the proof of Theorem~\ref{thm:main}.
\hfill $\Box$

\end{document}